\documentclass[a4paper,12pt]{article}
\usepackage[utf8]{inputenc}	
\usepackage[margin=1in]{geometry}
\usepackage{indentfirst}
\usepackage{tikz}
\usetikzlibrary{positioning}
\usetikzlibrary{patterns}

\usepackage{color}
\usepackage{amsmath}
\usepackage{amsthm}
\usepackage{mathrsfs}
\numberwithin{equation}{section}
\usepackage{enumitem}

\newtheorem{theorem}{Theorem}[section]
\newtheorem{proposition}[theorem]{Proposition}
\newtheorem{corollary}[theorem]{Corollary}
\newtheorem{lemma}[theorem]{Lemma}
\newtheorem{lemma-definition}[theorem]{Lemma-Definition}

\newtheorem{conjecture}[theorem]{Conjecture}

\theoremstyle{definition}
\newtheorem{definition}[theorem]{Definition}
\newtheorem{remark}[theorem]{Remark}

\newtheorem{example}[theorem]{Example}

\usepackage{amssymb}
\usepackage{authblk}
\usepackage{lscape}
\usepackage{bm}
\usepackage{tikz-cd}

\usepackage{graphicx}

\def\bbC{{\mathbb C}}

\def\bbR{{\mathbb R}}

\def\bbZ{{\mathbb Z}}

\def\bbQ{{\mathbb Q}}

\newcommand{\C}{{\mathbb C}}

\newcommand{\R}{{\mathbb R}}

\newcommand{\op}{\operatorname}

\AtEndDocument{\bigskip{\footnotesize
		\textsc{Arthur Melo, Instituto de Matemática Pura e Aplicada, IMPA, Rio de Janeiro, Brazil.} \par  
		\textit{E-mail}:  \texttt{ arthur.melo@impa.br}
		\\
		
		\textsc{Vinicius Ramos, Instituto de Matemática Pura e Aplicada, IMPA, Rio de Janeiro, Brazil.} \par  
		\textit{E-mail}:  \texttt{ vgbramos@impa.br}
		\\
		
		\textsc{Alejandro Vicente, University of Stavanger, Stavanger, Norway.} \par  
		\textit{E-mail}:  \texttt{ kvicente931207@gmail.com}
}}

\title{Symplectic capacities of $S^1$-invariant dynamically convex domains in $\mathbb{R}^4$}
\author{Arthur Melo, Vinicius Ramos, Alejandro Vicente}

\date{}

\begin{document}

\maketitle

\begin{abstract}
In this paper, we prove that all normalized symplectic capacities agree for dynamically convex domains in $\mathbb{C}^2$ that are invariant under any Hamiltonian $S^1$-action isotopic to the Hopf diagonal action. We also give necessary and sufficient conditions for $S^1$-invariant domains to be dynamically convex.
\end{abstract}

\section{Introduction}

A \textit{symplectic manifold} $(X,\,\omega)$ is a $2n$-dimensional manifold $X$ equipped with a nondegenerate closed 2-form $\omega$, called a \textit{symplectic form}. A very important question in symplectic geometry is whether there exists a \textit{symplectic embedding} of $(X_1,\omega_1)$ into $(X_2,\omega_2)$, i.e., a smooth embedding $\varphi:X_1\hookrightarrow X_2$ such that $\varphi^*\omega_2=\omega_1$. If such an embedding exists, we will write $ (X_1,\omega_1)\overset{s}{\hookrightarrow} (X_2,\omega_2)$. 
\begin{definition}
	A \textit{normalized symplectic capacity} is a function $c$ that assigns to each symplectic manifold $(X,\omega)$ an element in $[0,\infty]$  such that	the following conditions hold.
	\begin{itemize}
		\item (Monotonicity) If $ (X_1,\omega_1)\overset{s}{\hookrightarrow} (X_2,\omega_2)$, then
		\begin{equation}
			c(X_1,\omega_1)\leq c(X_2,\omega_2).
		\end{equation}
		\item (Conformality) For any $\lambda>0 $
		\begin{equation*}
			c(X,\lambda\omega)=\lambda\cdot c(X,\omega).
		\end{equation*}
		\item (Normalization) $$c\left(B^{2n}(r),\omega_0\right)=c\left(Z^{2n}(r),\omega_0\right)=r.$$ 
   \end{itemize}
\end{definition}
Here the symplectic ball and the symplectic cylinder are defined as $B^{2n}(r)=\{z\in \mathbb{C}^n\mid\pi(|z_1|^2+\ldots+|z_n|^2)\leq r\}$ and $Z^{2n}(r)=\{z\in \mathbb{C}^n\mid\pi|z_1|^2\leq r\}$, respectively. Moreover, $\omega_0=\sum_i dx_i\wedge dy_i$ is the standard symplectic form on $\R^{2n}\cong\C^n$.

Many symplectic capacities have been defined in several seemingly unrelated ways, such as the Hofer-Zehnder capacity in \cite{HOFERzehndercap}, the Ekeland-Hofer capacities in \cite{Ekeland1989}, the ECH capacities in \cite{Hutchings2010QuantitativeEC}. Within the class of all normalized symplectic capacities there are two special ones. The first one is referred to as the \textit{Gromov width} and is defined as:
\begin{equation*}
c_{\mathrm{Gr}}(X^{2n},\,\omega) =\sup\left\{r\mid (B^{2n}(r),\,\omega_0)\overset{s}{\hookrightarrow} (X,\,\omega)\right\}.
\end{equation*}
The second one is known as the \textit{cylindrical capacity} and is defined as
\begin{equation*}
c_Z(X^{2n},\,\omega) =\inf\left\{r\mid (X,\,\omega)\overset{s}{\hookrightarrow} (Z^{2n}(r),\,\omega_0)\right\}.
\end{equation*}

These two capacities are particularly important since it follows that for any normalized symplectic capacity $c$ and any symplectic manifold $(X,\omega)$, we have that
\begin{equation}\label{eq:cap_ineq}
c_{\mathrm{Gr}}(X,\,\omega)\leq c(X,\,\omega)\leq c_Z(X,\,\omega).
\end{equation}

In this paper, we will only consider domains $X\subset \R^{2n}$ endowed with $\omega_0$, so from now on we will omit the reference to $\omega_0$. Denote by $E(a_1,\cdots,a_{n})\subset \mathbb{C}^{n}$ the \textit{symplectic ellipsoid} given by 
\begin{equation}\label{eq: def_sympt_ellip}
\left\{(z_1,\ldots,z_{n})\in \bbC^{n}|\,\,\pi\left(\frac{|z_1|^2}{a_1}+\cdots+\frac{|z_{n}|^2}{a_{n}}\right)\leq 1 \right\}.
\end{equation}

It was first observed in \cite{hofer1994symplectic} that all normalized symplectic capacities agree in the class of symplectic ellipsoids. Therefore, a natural subsequent question would be to find a larger (or the largest) class of domains in $\mathbb{R}^{2n}$ where all normalized capacities agree. A folklore conjecture that has driven much research in the area during the last three decades is that all normalized symplectic capacities agree in the class of convex domains in $\mathbb{R}^{2n}$. This has been referred to in the literature as the \textit{strong Viterbo conjecture} due to its relation to the \textit{Viterbo conjecture}:

\begin{conjecture}[Viterbo \cite{Viterbo2000isoper}]\label{conj: Vit}
For any symplectic capacity $c$, and any convex domain $X\subset \mathbb{R}^{2n}$,
\begin{equation}\label{wvp}
c(X)^n\leq n!\op{Vol}(X).
\end{equation}
\end{conjecture}

It is not hard to show that the Gromov width satisfies \eqref{wvp}, hence the strong Viterbo conjecture implies the Viterbo conjecture.

Many partial results have been achieved for these two closely related conjectures. Of special interest is the connection found in \cite{ArtsteinAvidan2013FromSM} with the longstanding Mahler conjecture in the area of convex geometry. Namely, it was shown that the Viterbo conjecture for the  particular choice of the Hofer-Zehnder capacity and restricted to the subclass of bodies of the form $K\times K^{\circ}$, for $K\subset\mathbb{R}^n$ a centrally symmetric convex body and $K^{\circ}\subset\mathbb{R}^{n}$ its polar dual body, is equivalent to the Mahler conjecture stating that for such convex body:
$$\textup{Vol}(K)\textup{Vol}(K^{\circ})\geq \frac{4^n}{n!}.$$

However, it has been recently shown in \cite{HaimKislev2025Counterexample} that the Viterbo conjecture is not true as it was originally stated. The product of a pentagon and a rotated copy of it gives a counterexample. So, restricted to the class of convex bodies in $\mathbb{R}^{2n}$ all normalized symplectic capacities do not agree. Nevertheless, there are some subclasses where they have been shown to agree. For example, it was shown in \cite{Gutt2022Examples} that they agree in the class of monotone toric domains in $\mathbb{R}^4$ and this was later generalized to higher dimensions in \cite{cristofarogardiner2023agreementsymplecticcapacitieshigh}. Before that, Ostrover had shown that they agree in the class of convex bodies in $\mathbb{R}^{2n}$ invariant under the diagonal $S^1$-action $e^{i\theta}\cdot(z_1,\ldots,z_n)=(e^{i\theta}z_1,\ldots,e^{i\theta}z_n)$, see
\cite{Gutt2022Examples}. 
In this paper, we will give a generalization of this result, as stated in Theorem \ref{cor: result} below.

\begin{remark}
One more class of domains in $\mathbb{R}^{2n}$ where all normalized symplectic capacities are expected to agree is the class of convex domains given by the dual product of centrally symmetric convex domains, related to the Mahler conjecture mentioned above. There are some partial results in this direction for some particular examples coming from Hanner polytopes and $L^p$ balls. In \cite{vicente2025strongviterboconjecturevarious}, the third author found yet another class of dual products, with a different notion of duality, where all normalized symplectic capacities agree.
\end{remark}

\subsection{$S^1$-invariant domains}

Let  $X$  be a star-shaped domain, i.e., a compact domain in $\mathbb{C}^{n}$ with smooth connected boundary such that $0 \in \op{int}(X)$ and such that the radial vector field is transverse to $\partial X$. The induced Liouville form is given by $\lambda_0=\frac{i}{4}\sum_j(z_jd\overline{z}_j-\overline{z}_jdz_j)$. Note that the restriction of $\lambda_0$ to ${\partial X}$ is a contact form on $\partial X$. As usual, we will still denote by $\lambda_0$ the restricted 1-form on $\partial X$ for any star-shaped domain $X$.

A star-shaped domain $X$ is said to be $S^1$-invariant if there exists a free action of $S^1$ on $\partial X$ that preserves $\lambda_0$, i.e., the infinitesimal generator $V$ of the $S^1$-action satisfies $\mathcal{L}_{V}\lambda_0=0$. Any vector field $V$ of $\partial X$  can be extended radially in a homogeneous way to the whole $\bbC^{n}$. In this sense, we could also say that $X$ is $S^1$-invariant if it is invariant under the flow of some homogeneous vector field $V$ that generates a free $S^1$-action outside the origin. 
 
 In the case of $X\subset \bbC^2$, the boundary of $X$ is diffeomorphic to $S^3$. Moreover, every effective smooth circle action on $S^3$ is fully classified up to orientation-preserving equivariant diffeomorphism: they generate a Seifert fibration\footnote{ an $S^1$-bundle over a 2-dimensional orbifold.}, and correspond to one of the linear actions
\begin{equation}
	(z_1,z_2)\mapsto (e^{i\theta k}z_1,e^{i\theta l}z_2),
\end{equation}
with $\gcd(k,l)=1$ (see \cite{Epsteinflows3dim}). Moreover, if we assume that the action is free, then the only options are $k=1,l=\pm 1$. We will refer to the cases $l=1$ and $l=-1$ as the Hopf and anti-Hopf actions, respectively. These actions are conjugate, but not by an orientation-preserving map. They give rise to $S^1$-fibrations over $\mathbb{CP}^1$ with Euler numbers -1 and 1, respectively.  

\begin{definition}
	If a free $S^1$-action on $S^3$ is conjugate  to the Hopf action via an orientation-preserving equivariant diffeomorphism, we call it a \textit{positive free $S^1$-action}. Otherwise, we call it a \textit{negative free $S^1$-action}. Similarly, as the boundary of star-shaped domain $X$ is diffeomorphic to $S^3$, we say that a free $S^1$-action on $\partial X$ is positive (negative) if it is conjugate,  via an orientation-preserving equivariant diffeomorphism, to a positive (negative) free $S^1$-action on $S^3$. We say that $X$ is positive (negative) $S^1$-invariant if its respective free $S^1$-action on $\partial X$ is positive (negative).  
\end{definition}
  We now recall the definition of dynamically convex contact forms in $S^3$.

\begin{definition}\label{def: dynconv}
	Let $(S^3,\lambda)$ be a contact 3-sphere such that $\ker \lambda =\xi_{std}$. The contact form $\lambda$ is called \textit{dynamically convex} if the Conley--Zehnder index of every Reeb orbit is at least 3, which is equivalent to say that the rotation number of every Reeb orbit is strictly greater than 1. Here, we are considering the lower semi-continuous extension of the  Conley--Zehnder index.
\end{definition}

We are now ready to precisely state the main result of this paper.
\begin{theorem}\label{cor: result}
	Let $X\subset \bbR^4$ be a star-shaped positive $S^1$-invariant dynamically convex domain. Then 
	\begin{equation*}
		c_{\mathrm{Gr}}(X)= c_Z(X).
	\end{equation*}
\end{theorem}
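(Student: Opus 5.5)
Since every normalized capacity lies between $c_{\mathrm{Gr}}$ and $c_Z$ by \eqref{eq:cap_ineq}, the plan is to show that both are equal to $A_{\min}(X)$, the minimal action of a closed Reeb orbit of $(\partial X,\lambda_0)$. We treat the two inequalities $c_{\mathrm{Gr}}(X)\ge A_{\min}(X)$ and $c_Z(X)\le A_{\min}(X)$ separately. The second half of the argument must use the positivity of the action, since for convex domains these capacities are known to differ in general.

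\textbf{Step 1: symplectic reduction.} The $S^1$-action preserves $\lambda_0$, so its generator $V$ has a Hamiltonian $H=\lambda_0(V)$, which is homogeneous of degree $2$ after radial extension. By Reeb's equation, the Reeb vector field $R$ commutes with $V$. Therefore $\partial X/S^1\cong S^2$ carries a symplectic (area) form, and the Reeb flow descends to a Hamiltonian flow of a function $f$ on $S^2$ with $f=H|_{\partial X}$. The free orbits of the action (the fibers) are closed orbits of the Reeb flow only when $R$ is parallel to $V$, which happens at critical points of $f$. At such points the action equals the value $\lambda_0(V)$ of the fiber, up to a period normalization. The other closed Reeb orbits project to closed level curves of $f$. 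Their actions can be expressed by a formula combining the area enclosed by the level curve and the fiber period, using the Euler number $-1$ of the positive Hopf fibration.

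\textbf{Step 2: computing the indices.} Using the reduction, I would compute the rotation numbers of the fiber orbits in terms of the Hessian of $f$ at the critical points. I would then show that dynamical convexity forces $f$ to have no saddles, or more precisely that some "convexity" condition holds along the critical set. The expected outcome is that $f$ has exactly one minimum and one maximum, and that the minimal action orbit is the fiber over the minimum of $f$ (or of $H$). This is where the sign of the Euler number enters. For the Hopf action the fiber orbits have rotation number near $2$ relative to the trivialization, which is compatible with dynamical convexity. This step is likely the main obstacle. One has to carefully relate the index of the fiber to the Hessian of the reduced function, and also control the iterates and the non-fiber orbits, showing that they have larger action than $\min H$.

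\textbf{Step 3: the capacity bounds.} For the upper bound on $c_Z$, I would build an explicit symplectic embedding of $X$ into $Z(\min_{\partial X} H)$. The idea is to take the equivariant identification of $\partial X$ with a standard $S^3$ carrying the Hopf action and to use the moment map $H$ as the "$|z_1|^2$-type" coordinate. Concretely, the positive action is conjugate to the diagonal action, and after an equivariant symplectomorphism the domain is invariant under the diagonal action. The quotient map $\mathbb{C}^2\setminus 0\to\mathbb{C}P^1\times\mathbb{R}_{>0}$ then lets us squeeze $X$ into a cylinder whose capacity is the minimal fiber action, as in Ostrover's argument. For the lower bound on $c_{\mathrm{Gr}}$, the invariant ball $B(\min_{\partial X} H)$ sits inside $X$ after this same equivariant identification, since every fiber of $\partial X$ has action at least $\min H$ and the region is star-shaped. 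Dynamical convexity, via Step 2, is what guarantees that the sublevel description is star-shaped and monotone in the reduced picture. Combining the two bounds gives $c_{\mathrm{Gr}}(X)\ge\min H\ge c_Z(X)$, and hence equality.
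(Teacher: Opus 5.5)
\textbf{The gap: the upper bound on $c_Z$.} The missing piece is the bound $c_Z(X)\le \pi e^{m}$, with $m=\min f$, which is the hard half of the theorem. Ostrover's argument is, in essence, a supporting-hyperplane argument. If $z_0\in\partial X$ is a point closest to the origin, convexity gives $X\subset\{\mathrm{Re}\langle z,z_0\rangle\le |z_0|^2\}$, where $\langle\cdot,\cdot\rangle$ is the Hermitian product. Invariance under the diagonal action upgrades this to $X\subset\{|\langle z,z_0\rangle|\le |z_0|^2\}$, a cylinder of capacity $\pi|z_0|^2$.

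A dynamically convex domain has no such hyperplane. Your equivariant identification with a Hopf-invariant domain is also nonlinear, so convexity would not survive it anyway. ``Squeezing via the quotient $\mathbb{CP}^1\times\mathbb{R}_{>0}$'' is therefore not an argument. When $f$ has saddles, there is not even a global toric model on which to attempt an explicit construction.

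The missing ingredient is Edtmair's theorem: a dynamically convex $X\subset\mathbb{C}^2$ embeds into $Z^4(a)$ if and only if $a\ge\mathcal{A}_{\mathrm{Hopf}}(X)$. This is a deep result based on disk-like global surfaces of section, and it is the only place dynamical convexity is used. With it the proof is immediate:
\begin{itemize}
\item the fiber over a minimum of $f$ is a Reeb orbit that is a Hopf fiber, hence unknotted with self-linking number $-1$;
\item its action is $\pi e^m$, which is also the capacity of the round ball $\mathbb{X}_G$, $G\equiv e^m$, and $\mathbb{X}_G\subset\mathbb{X}_F$ because $G\le F$;
\item hence $c_Z(X)\le\mathcal{A}_{\mathrm{Hopf}}(X)\le\pi e^m\le c_{\mathrm{Gr}}(X)$.
\end{itemize}

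\textbf{Step 2 is unnecessary, and its expected outcome is false.} Dynamical convexity does not force $f$ to have only one minimum and one maximum. A nondegenerate saddle gives a fiber with $CZ=4$ (Theorem \ref{thm:RSdis}). Moreover, a $C^2$-small Morse function $f$ with saddles produces a $C^2$-small perturbation of the round ball, which is strictly convex and hence dynamically convex. You also do not need to show directly that the fiber over $\min f$ realizes $A_{\min}$; that follows a posteriori from the chain of inequalities.

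\textbf{The roles of the hypotheses are reversed in your outline.} The lower bound needs no dynamical convexity at all; it is Theorem \ref{thm:main}. Positivity enters there through Giroux's classification, a point your Step 1 glosses over. An invariant tight structure with Euler number $-1$ is transverse to the fibers, so $\lambda_0(V)$ never vanishes and $\partial X$ is strictly contactomorphic to $(S^3,e^{p^*f}\lambda_0)$. So positivity is what ties Hopf orbits to inscribed balls, giving the Gromov width bound. The bound $c_Z\le\mathcal{A}_{\mathrm{Hopf}}$ holds for every dynamically convex domain, with no symmetry assumed.
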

 This result generalizes the results in \cite{Gutt2022Examples}, where it was proved that all normalized symplectic capacities agree for dynamically convex toric domains and convex domains invariant under the Hopf flow. It also partially generalizes the result in \cite{vialaret2024sharpsystolicinequalitiesinvariant}, which proved the Viterbo conjecture for convex domains in $\mathbb{R}^4$ invariant under free, symplectic $S^1$-actions.
 
 The proof of Theorem \ref{cor: result} involves finding a special orbit that is unknotted and has self-linking number $-1$ (see §\ref{sec:hopf_orbits} for the definition of self-linking number). We call any Reeb orbit on a contact 3-manifold satisfying these properties a \textit{Hopf orbit}. 
 
 The concept  of Hopf orbits was first introduced in \cite{tmna/1479265300}, where it was shown that, for any star-shaped domain $X\subset \mathbb{R}^4$, its boundary contains a Hopf orbit. Hence,  we can define the non-empty set $\mathcal{P}_{\mathrm{Hopf}}(\partial X)$ of Hopf orbits and the quantity
 $$\mathcal{A}_{\mathrm{Hopf}}(X)=\inf\left\{\mathcal{A}(\gamma)\mid\gamma\in \mathcal{P}_{\mathrm{Hopf}}(\partial X)\right\}.$$

 We can now state the following intermediate result, which will be proved in §\ref{sec:hopf_orbits}.
 \begin{theorem}\label{thm:main}
 		Let $X\subset \bbR^4$ be a star-shaped positive $S^1$-invariant domain. Then 
 		\begin{equation}\label{eq:cor_ineq}
 			\mathcal{A}_{\mathrm{Hopf}}(X)\leq c_{\mathrm{Gr}}(X)
 		\end{equation}
 \end{theorem}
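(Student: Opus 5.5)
The plan is to find a Reeb orbit on $\partial X$ that is an orbit of the $S^1$-action, show that it is a Hopf orbit, and embed into $X$ a ball whose capacity equals its action. Let $V$ be the generator of the free positive $S^1$-action on $\partial X$, normalized to have period $1$. Since $\mathcal{L}_V\lambda_0=0$, Cartan's formula gives $d(\lambda_0(V))=-\iota_V d\lambda_0$. So $H:=\lambda_0(V)$ is an $S^1$-invariant contact Hamiltonian, and $V$ commutes with the Reeb field $R$ of $\lambda_0|_{\partial X}$.

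\textbf{Step 1: reduce to a Hopf-invariant domain.} First I check that $H>0$. Then $\lambda_0/H$ is an invariant contact form whose Reeb field is $V$ itself, and positivity of the action should force the orientation of $\lambda_0$ and $\lambda_0/H$ to agree, so $H$ does not change sign; I would fix the sign by comparison with the Hopf model. By Definition of positivity, there is an orientation-preserving equivariant diffeomorphism $\partial X\to S^3$ carrying the action to the Hopf action. Next I apply an equivariant Gray stability argument. The key facts are:
\begin{itemize}
\item the space of $S^1$-invariant contact forms with Reeb field the Hopf generator is the space of connection forms on the Hopf bundle with positive curvature;
\item this space is convex, and averaging over $S^1$ preserves the contact condition.
\end{itemize}
This gives an equivariant contactomorphism $\phi:(\partial X,\lambda_0)\to (S^3, f\,\lambda_{std})$, where $f=H\circ\phi^{-1}>0$ is Hopf-invariant and $V$ is carried to the Hopf generator. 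A strict contactomorphism between the boundaries of two star-shaped domains lifts radially to an exact symplectomorphism of the domains. Hence $X$ is symplectomorphic to the Hopf-invariant star-shaped domain
$$X_f=\{z\in\mathbb{C}^2 : \pi|z|^2\le f(z/|z|)\},$$
where $f$ is viewed as a function on $\mathbb{CP}^1$. Hopf orbits, actions and $c_{\mathrm{Gr}}$ are all preserved by this symplectomorphism.

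\textbf{Step 2: locate a Hopf orbit.} On $\partial X_f$ the Reeb field differs from the Hopf field by the Hamiltonian lift of $f$ on $\mathbb{CP}^1$. At a critical point $p$ of $f$, the Hopf fiber over $p$ is a closed Reeb orbit. Its action is $\int\lambda_0=f(p)$ with the normalization above; I would double-check the factor $\pi$ against the definition of $B^4(r)$. Take $p$ to be a minimum, so the action is $\min f$. This fiber is a Hopf fiber of $S^3$ pushed radially, so it is unknotted. Its self-linking number is computed with a trivialization of $\xi$ that is homotopic to the standard one on the round sphere, where Hopf fibers have self-linking $-1$. So the fiber is a Hopf orbit, and $\mathcal{A}_{\mathrm{Hopf}}(X)\le \min f$.

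\textbf{Step 3: embed the ball.} Since $f\ge \min f$ everywhere, $B^4(\min f)\subset X_f$. Therefore $c_{\mathrm{Gr}}(X)=c_{\mathrm{Gr}}(X_f)\ge \min f\ge \mathcal{A}_{\mathrm{Hopf}}(X)$, which is \eqref{eq:cor_ineq}.

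The main obstacle is Step 1. It requires producing the equivariant strict contactomorphism, including the equivariant Moser/Gray argument and the check that positivity gives $H>0$ with the correct orientation, and then lifting it to a symplectomorphism of the star-shaped domains. Everything after that is explicit in the model $X_f$.
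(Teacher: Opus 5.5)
Your overall plan is the paper's proof. You reduce to a Hopf-invariant model $(S^3,F\lambda_0)$, take the fiber over a minimum of $f$ as the Hopf orbit, and inscribe the round ball of the same capacity. Steps 2 and 3 are exactly what the paper does, and your equivariant Gray/Moser argument is fine \emph{once} $\lambda_0/H$ is known to be a connection form.

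The gap is the first claim of Step 1: that $H=\lambda_0(V)$ vanishes nowhere, i.e.\ that $\xi$ is transverse to the $S^1$-orbits. Your justification is circular. $\lambda_0/H$ only exists where $H\neq 0$, and wherever it exists it is automatically a positive contact form, since $(\lambda_0/H)\wedge d(\lambda_0/H)=H^{-2}\,\lambda_0\wedge d\lambda_0$. So no orientation comparison can detect anything. The sign is not the issue: if $H<0$, replace $V$ by $-V$, since the reversed Hopf action is conjugate to the Hopf action by complex conjugation, which preserves orientation. Knowing that ``$H$ does not change sign'' would indeed suffice, because $dH=-\iota_V d\lambda_0\neq 0$ along $\{H=0\}$. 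But nothing in your plan proves it.

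This step cannot come from positivity of the action and orientations alone. It needs the tightness of $\xi_{std}$ on $\partial X$, which your argument never uses. A priori $H$ vanishes exactly over a multicurve $\Gamma\subset S^2$ (Giroux's dividing set). Lutz-type constructions produce positive Hopf-invariant contact forms on $S^3$ with $\Gamma\neq\emptyset$; these are overtwisted.

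Tightness alone is not enough either. $\lambda_0$ on the round sphere is tight and invariant under the anti-Hopf action, yet $\lambda_0(\partial_{\theta_1}-\partial_{\theta_2})=\frac12(|z_1|^2-|z_2|^2)$ vanishes on the Clifford torus. So the sign of the Euler number must also enter.

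The missing ingredient is the one the paper invokes: Giroux's classification of $S^1$-invariant tight contact structures on circle bundles, as used by Vialaret. For Euler number $-1$ it forces $\Gamma=\emptyset$, i.e.\ transversality to the fibers. With that input, $\lambda_0/H$ is a connection form with positive curvature, and the rest of your argument goes through as in the paper.
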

 Using this result, we can now prove Theorem \ref{cor: result}.
\begin{proof}[Proof of Theorem \ref{cor: result}]
It was shown in \cite{edtmair2023disklikesurfacessectionsymplectic} that for any 
dynamically convex domain $X\subset \mathbb{C}^2$, there exists a symplectic embedding $X\hookrightarrow Z^4(a)$, if and only if, $a\geq \mathcal{A}_{\mathrm{Hopf}}(X)$. In particular,
\begin{equation}\label{eq: edtmair}
	c_{Z}(X)\le \mathcal{A}_{\mathrm{Hopf}}(X).
\end{equation}
 If $X$ is positive $S^1$-invariant, we get the inequality \eqref{eq:cor_ineq}. We conclude the proof by combining \eqref{eq:cap_ineq}, \eqref{eq:cor_ineq} and \eqref{eq: edtmair}.
\end{proof}

On the other hand, the same technique does not seem to apply to the case of dynamically convex negative $S^1$-invariant domains. This happens because there is no clear relation between the Hopf orbits and ball embeddings on those domains; see §\ref{sec:hopf_orbits} for a full discussion and partial results.

\subsection{Characterizing dynamically convex invariant domains}
The second part of this paper is devoted to characterizing dynamically convex domains in $\mathbb{R}^4$ invariant under the Hopf flow.

We recall that there is a strict contactomorphism between $(\partial X,\lambda_0)$ and $(S^{3},F\lambda_0)$, for a smooth function $F:S^{3}\rightarrow \mathbb{R}_{>0}$. In fact, given a star-shaped domain $X$, $F$ is defined by
$F(\mathbf{w})=\sup\{r^2\mid r\mathbf{w}\in X\}$. Conversely, for any function $F:S^{3}\rightarrow \mathbb{R}_{>0}$ we can define the domain
\begin{equation*}
	\mathbb{X}_F=\left\{\mathbf{z}\in \mathbb{C}^{2}\backslash\{0\}\mid |\mathbf{z}|^2\leq F\left(\frac{\mathbf{z}}{|\mathbf{z}|}\right)\right\}\cup \{0\},
\end{equation*}
which is a star-shaped domain whose boundary is strictly contactomorphic to $(S^{3},F \lambda_0)$.

Consider the Hopf flow on $S^3$ and the associated Hopf fibration $p:S^{3}\xrightarrow[]{}\bbC P^1$, where
we take a smooth function $f:\bbC P^1\xrightarrow[]{}\bbR$ and define the contact form $\lambda_f=e^{p^*f}\lambda_0$ on $S^{3}$. If $F=e^{p^*f}$, then it follows from the discussion above that $(\partial \mathbb{X}_F,\lambda_0)$ is strictly contactomorphic to $(S^{3},\lambda_f)$.

\begin{example}
	For the symplectic ellipsoid $E(a,b)\subset \mathbb{C}^{2}$ defined in \eqref{eq: def_sympt_ellip}, a straightforward computation shows that $E(a,b)=\mathbb{X}_F$ for $F=e^{p^*f}$, where $f:\mathbb{CP}^1\rightarrow \mathbb{R}$ is given by 
	$$f([z_1:z_2])=\ln\left( \dfrac{ab(|z_1|^2+|z_{2}|^2)}{b\pi|z_1|^2+a\pi|z_2|^2}\right).$$
\end{example}

This class of domains was briefly studied in \cite{hutchings2024zetafunctionsdynamicallytame}, where it was shown that, for cases where the function $f$ is Morse--Bott with more than two isolated critical points, the domain $\mathbb{X}_F$ associated to it is not symplectomorphic to a toric domain.

We can also understand $S^1$-invariant domains $\mathbb{X}_F$ from the point of view of integrable systems. Indeed, it is not hard to see that $\mathbb{X}_F$ is invariant under the Hamiltonian flow of the Poisson commuting Hamiltonians $H:\mathbb{X}_F\subset \mathbb{C}^2\to \mathbb{R}$ and $G:\mathbb{X}_F\subset \mathbb{C}^2\to \mathbb{R}$ given by:
$$H(\mathbf{z})=|\mathbf{z}|^2, \quad G(\mathbf z)=
\begin{cases}
|\mathbf z|^2\,F\!\left(\dfrac{\mathbf z}{|\mathbf z|}\right),
& \mathbf z \neq 0, \\[0.8em]
0, & \mathbf z = 0.
\end{cases}.$$

Notice that $\partial \mathbb{X}_F=(G/H-H)^{-1}(0)$. Moreover, both $H$ and $G$ are homogeneous, therefore satisfying the hypothesis in \cite{Colombo_2025}. Therefore, toric coordinates actually give a local strict contactomorphism between $\partial \mathbb{X}_F$ and toric contact manifolds. In fact, there exists a set of toric coordinates that describe (up to a well-understood zero-measure set) any $S^1$-invariant domain $\mathbb{X}_F$ with a collection of curves $\{\alpha_i\}$ in $\bbR^2_{\geq 0}$. In general, toric coordinates are not canonical and it is possible to find another set of curves by applying a transformation of $SL(2,\bbZ)$. However, we find a set of toric coordinates which is canonical up to reflection on the $y=x$ axis for this case.
 
Before expanding on this, we first recall the definition of toric domains. Let $\Omega$ be a domain in $\bbR^2_{\geq 0}$ and $\mu:\bbC^2\to \bbR^2$ the moment map for the standard toric action on $\mathbb{C}^2$ given by $\mu(z_1,z_2)=\pi(|z_1|^2,|z_2|^2)$. We then define the \textit{toric domain}
 $$X_{\Omega}=\{z \in \bbC^2\,|\,\mu(z_1,z_2)\in \Omega\}.$$
We define $\partial_+\Omega:=\partial\Omega\cap \mathbb{R}_{>0}^2$ . We can describe the boundary of $\partial X_\Omega $ fully by the curve $\overline{\partial_+\Omega}$. Moreover, we can parametrize the curve by some map $\alpha:[0,1]\mapsto \overline{\partial_+\Omega}\subset  \bbR^2_{\geq 0}$ where $\alpha(t)\in \partial_+\Omega$ for $t\in(0,1)$.
 
 We can also consider curves $\alpha:[0,1)\mapsto \bbR^2_{\geq 0}$ such that $\alpha(0)$ is the only intersection with either the axis $x=0$ or $y=0$ and  $\alpha:(0,1)\mapsto \bbR^2_{>0}$. Notice then that $\mu^{-1}(\textup{Im}(\alpha))$ is a contact toric manifold, homeomorphic to a solid torus, in the former case, or an interval times a torus, in the latter case. From now on, we refer to a \textit{piece of curve} in $\bbR^2_{\geq 0}$ as one of these three types of curves. 
 
 \begin{definition}
 	A smooth curve $\alpha$ on $\bbR^2_{\geq 0}$ is said to be \textit{monotone} if, at any point $\alpha(t)$ on the curve, the outward normal vector to $\alpha$ has positive coordinates. Notice that this property is preserved under a reflection on the $y=x$ axis.
 \end{definition}
This definition was introduced in the context of toric domains in \cite{Gutt2022Examples}, and it was proved to be equivalent to dynamical convexity in the class of toric domains. Here we extend this result for domains invariant under the Hopf flow and prove the following result.
\begin{theorem}\label{thm:characdynconv}
	Let $f:\mathbb{CP}^1\mapsto\bbR$ be a Morse--Bott function and let $\Lambda$ be the union of the connected components of $f^{-1}(\text{crit}(f))$ containing a nondegenerate saddle point of $f$. Then there is a finite family of curves $\{\alpha_i\}_{i\in I}$ in $\bbR^2_{\geq 0}$ indexed by the connected components of $\mathbb{CP}^1\setminus\Lambda$, which are canonical up to reflection on the $y=x$ axis, such that $(S^3\setminus p^{-1}(\Lambda),\lambda_f)$ is strictly contactomorphic to $\left(\bigsqcup_{i\in I}\mu^{-1}(\mathrm{im}(\alpha_i)),\lambda_0\right)$. 
	
	Moreover, $\lambda_f$ is dynamically convex if, and only if, the associated curves $\{\alpha_i\}$ in $\bbR^2_{\geq 0}$ are all monotone. 
\end{theorem}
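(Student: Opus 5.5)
The plan is to use the integrable structure that the Hopf fibration $p:S^3\to\mathbb{CP}^1$ induces on $(S^3,\lambda_f)$. First compute the Reeb field. Since $\lambda_f=e^{p^*f}\lambda_0$, one expects
$$R_f=e^{-p^*f}\bigl(R_0+\widetilde{X}\bigr),$$
where $R_0$ generates the Hopf flow and $\widetilde{X}$ is the horizontal lift of a Hamiltonian vector field on $\mathbb{CP}^1$. The Hamiltonian should be a function of $f$ (essentially $e^{-f}$, or $f$ after a time reparametrization), and the symplectic form on $\mathbb{CP}^1$ is the Fubini--Study form $d\lambda_0$ pushed down. Two consequences follow. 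The Hopf fiber generator $R_0$ commutes with $R_f$. The Reeb flow projects to the Hamiltonian flow of $f$ on $\mathbb{CP}^1$.

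Next, fix a connected component $U$ of $\mathbb{CP}^1\setminus\Lambda$. Because $f$ is Morse--Bott, the regular level sets of $f$ in $U$ are circles. Hence $U$ is either an annulus foliated by circles, or a disc foliated by circles around an extremum (possibly a Morse--Bott critical circle). On $U$, take action-angle coordinates for $f$. Lifting the angle and combining it with the fiber circle gives a free $T^2$-action on $p^{-1}(U)$ that preserves $\lambda_f$ and commutes with $R_f$. The contact moment map $\lambda_f(\cdot)$ evaluated on the two generators then identifies $p^{-1}(U)$ with $\mu^{-1}(\operatorname{im}\alpha_U)$ for a curve $\alpha_U$. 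Its endpoints lie on an axis exactly when the corresponding end of $U$ is an extremum, where one of the torus generators degenerates. By a standard equivariant Darboux/Moser argument this identification is a strict contactomorphism.

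Canonicity comes from the basis of $\mathbb{Z}^2$. One generator is forced to be the Hopf fiber; the other is the lifted level circle, and its ambiguity by multiples of the fiber is removed by requiring the correct degeneration at the endpoints (an axis endpoint must be $\mu^{-1}$ of an axis). The only remaining freedom is swapping the generators, which is reflection in $y=x$. I expect the annulus case to need care here: I would fix the normalization by continuity from a neighbouring disc component, or by requiring the slope at the endpoints to match the fiber class.

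For dynamical convexity, classify the Reeb orbits. They are of three kinds.
\begin{itemize}
\item Orbits on the invariant tori over regular levels. These correspond to points of $\alpha_U$ with rational normal direction.
\item Fibers over extrema, including Morse--Bott critical circles. These correspond to endpoints or critical points of $\alpha_U$ on the axes.
\item Fibers over saddles, i.e.\ orbits inside $p^{-1}(\Lambda)$.
\end{itemize}
For the first two kinds, I would invoke the computation of Gutt--Hutchings--Ramos for toric domains. That computation shows that the rotation numbers of such orbits exceed $1$ exactly when the outward normal has positive coordinates, provided the rotation numbers are measured in the trivialization of $\xi_{std}$ coming from $\mathbb{C}^2$. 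For the saddle fibers, the linearized return map in the global trivialization splits as the full Hopf rotation (contributing rotation number $2$ per fiber cover) composed with the hyperbolic linearization of the Hamiltonian flow at the saddle (contributing $0$). This gives rotation number $2k$ for the $k$-fold cover, so these orbits never obstruct dynamical convexity. This explains why $\Lambda$ can be removed.

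The main obstacle is the comparison of trivializations. The strict contactomorphism on $p^{-1}(U)$ does not extend to a global symplectomorphism of $\mathbb{C}^2$. It is therefore not automatic that the toric trivialization restricted to $\mu^{-1}(\operatorname{im}\alpha_U)$ agrees, up to homotopy along orbits, with the global trivialization of $\xi_{std}$ on $S^3$. I would settle this by computing both in terms of the fiber class: express each trivialization via the linking number of the orbit with a Hopf fiber and its self-linking. With the normalization chosen above (one generator equal to the fiber), the two should agree.

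Finally, the lower semicontinuous extension and degenerate orbits must be handled. At the non-strict inequalities, monotonicity fails by the normal becoming parallel to an axis. I would treat this as in the toric case, by checking that a vertical or horizontal normal produces an orbit of rotation number exactly $1$, hence violates dynamical convexity.
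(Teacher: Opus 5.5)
Your architecture matches the paper's: the horizontal-lift form of $R_f$, toric coordinates on the components of $\mathbb{CP}^1\setminus\Lambda$, the Gutt--Hutchings--Ramos rotation numbers on the pieces, and the splitting at saddles giving rotation number $2k$. The direction ``dynamically convex $\Rightarrow$ monotone'' has a gap, however.

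Your claim that rotation numbers exceed $1$ exactly when the outward normal has positive coordinates is false orbit by orbit. A level with $(h'+h)/\pi=3/4$ has primitive normal $(-1,3)$ and rotation number $2a-b=2$. More generally, every primitive vector strictly between $(0,1)$ and $(-1,2)$ has $\nu_1+\nu_2\ge 2$. So a piece on which $(h'+h)/\pi$ stays in $(2/3,1)$ carries only orbits with $\rho\ge 2$, yet is not monotone.

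The toric equivalence is global. You need a point on each piece where the normal is known to be positive. Then continuity produces a normal parallel to $(1,0)$ or $(0,1)$, hence an orbit with $\rho=1$, as soon as monotonicity fails. For a piece meeting an axis, the axis orbit (with $\rho=1-\op{slope}$) supplies this anchor. An annular component bounded by saddle levels has no axis endpoint, and ``as in the toric case'' gives nothing there. The missing ingredient, which the paper uses, is that the period $|h'(y)|$ of the level circles blows up as $y$ approaches a saddle level. By \eqref{eq:slope} the slope then tends to $-1$, so the normal tends to the direction $(1,1)$.

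The trivialization issue you single out is genuine, but ``the two should agree'' is not an argument. The paper avoids any linking computation. Since $\ker\lambda_f=\xi_{std}$ for every $f$, and the linearized Reeb flow along an orbit only sees $f$ near its level circle, one can replace $f$ away from the relevant cylinder by a function with just two extrema. Lemma~\ref{lemmma:toric}(i) then identifies all of $S^3$ with the boundary of an honest star-shaped toric domain. Its curve contains the original piece, because $h$ depends only on $\omega$ and the level circle. Since $H^1(S^3;\bbZ)=0$, the rotation number with respect to any global trivialization is intrinsic, so the toric values transfer.

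Three smaller points:
\begin{itemize}
\item The Hopf fiber corresponds to $(1,1)$, not to a basis vector, so swapping your generators is not the reflection in $y=x$.
\item On annular components, the lift of the level circle is pinned down by the holonomy $h(y)\in(0,\pi)$. Equivalently, require $(I_1,I_2)$, whose sum is $\pi r e^f$, to lie in the open quadrant.
\item Morse--Bott critical circles give interior points of slope $-1$ with $\rho=2$, not axis points.
\end{itemize}
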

\noindent{\bf Structure of the paper:} In Section \ref{sec:hopf_orbits} we prove Theorem \ref{thm:main} and study the case of the anti-Hopf action. In Section \ref{sec: dyn_conv} we compute the Conley--Zehnder index of distinguished and undistinguished Reeb orbits. We also prove Theorem \ref{thm:characdynconv} characterizing Hopf flow invariant dynamically convex domains.

\noindent{\bf Acknowledgements:} A.M. would like to thank Simon Vialaret for discussions on $S^1$-invariant contact forms on $S^3$ during his time in Bochum. A. M. was supported by a CNPq doctoral scholarship grant number 141615/2023-0. V.R. was partially supported by CNPq productivity grant number 308678/2025-
7, FAPERJ Jovem Cientista do Nosso Estado grant number E-26/204.621/2024 and a
grant from the Serrapilheira Institute. This work was done during A. V.  Postdoc Fellow position at the Hebrew University of Jerusalem, supported by the ISF Grant No. 2445/20.

\section{Hopf orbits on circle invariant contact forms}\label{sec:hopf_orbits}
We start by recalling the definition of the self-linking number for transverse knots. Let $K$ be a knot in $Y$ that is unknotted and positively transverse to $\xi$ with respect to some co-orientation of $\xi$. The self-linking number $\op{sl}(K)$ is defined as follows. Take $u:\mathbb{D}\to Y$ to be a capping disk of $K$ and let $Z$ be a smooth non-vanishing section of the vector bundle $u^*\xi\rightarrow \mathbb{D}$. Take $\epsilon>0$ small and consider an exponential map in $Y$ so that we can define the knot 
$$K_{\epsilon}:t\in \mathbb{R}/\mathbb{Z}\mapsto \exp_{u(e^{2\pi i t})}(\epsilon Z(u(e^{2\pi i t})))\in Y. $$
The \textit{self-linking} number of $K$ is defined as the intersection number:
$$\op{sl}(K)=\#(K_{\epsilon}\cap u),$$
where  the count of intersections is done with signs. Observe that this notion depends only on the transverse knot and on the contact structure. In particular, a Hopf fiber, i.e., any fiber of the Hopf fibration, has self-linking number $-1$ with respect to the standard contact structure.

Let $X\subset \bbC^2$ be a star-shaped positive $S^1$-invariant domain. Since the action is positive, the associated circle bundle $\partial X\to \partial X/S^1$ has Euler number $-1$. By the Giroux classification (see \cite{girouxcirclefibers,Lutz}), as used by Vialaret in \cite{vialaret2024sharpsystolicinequalitiesinvariant}, an invariant tight contact structure with Euler number $-1$ is transverse to the circle fibers. Hence $(\partial X, \lambda_0)$ is strictly contactomorphic to $(S^3,F\lambda_0)$ where $F:S^3\mapsto \bbR_{>0}$ is invariant under the Hopf flow. Then we write $F=e^{p^*f}$ and $\lambda_f=e^{p^*f}\lambda_0$. A routine computation shows that the Reeb vector field $R_f$ associated with the contact form $\lambda_f$ is given by
\begin{equation}\label{eq: preqflow}
	R_f=\frac{1}{F}(R_0+\widetilde{X}_f),
\end{equation}
where $\widetilde{X}_f\in \ker \lambda_f$ is the horizontal lift to $S^{3}$ of the Hamiltonian vector field $X_f$ associated with $f$ on $(\mathbb{C} P^1,\omega)$, and $\omega$ is the Fubini-Study symplectic form. It is not hard to see that $\omega$ satisfies that $d\lambda_0=p^*\omega$, see \cite{hutchings2024zetafunctionsdynamicallytame}. It follows from \eqref{eq: preqflow} that for every critical point $x\in \mathbb{CP}^1$, the fiber $p^{-1}(x)$ is a closed Reeb orbit, since $X_f|_x=0$. We denote this Reeb orbit by $\gamma_x$.

\begin{proof}[Proof of Theorem \ref{thm:main}]
	Let $x$ be a critical point of the function $f:\mathbb{CP} ^1\rightarrow \mathbb{R}$. Since $\gamma_x=p^{-1}(x)$, it follows that $\gamma_x$ is a Hopf fiber and therefore it is a Hopf orbit.
	
Let $m=f(x_0)$ be the global minimum of $f$. It follows from \eqref{eq: preqflow} that $\mathcal{A}(\gamma_{x_0})=\pi e^{m}$. Now, define the constant function $g:\mathbb{C} P^{1}\rightarrow \mathbb{R}$ by $g(z)=m$ for all $z \in \mathbb{CP}^1$ and let $G=e^{p^* g}$. Then $G \leq F$ and therefore $\mathbb{X}_G \subseteq \mathbb{X}_F$. Since $\mathbb{\mathbb{X}}_G$ is the symplectic ball of capacity $\pi e^m$, it follows that
	$$\mathcal{A}_{\mathrm{Hopf}}(\mathbb{X}_F)\leq \mathcal{A}(\gamma_{x_0})= \pi e^m= c_{\mathrm{Gr}}(\mathbb{\mathbb{X}}_G) \leq c_{\mathrm{Gr}}(\mathbb{X}_F)$$
\end{proof}

It is interesting to analyze what happens in the case of the $S^1$-action being the anti-Hopf action. In general, Theorem \ref{thm:main} fails for a contact form $F\lambda_0$ which is invariant under the anti-Hopf flow. This is due to the fact that critical levels of $f$, the function induced in $\mathbb{CP}^1$ by taking the quotient by the $S^1$-action, might not even contain a Reeb orbit. However, instead of looking at critical points of $F$, we can look at critical points of $A=\frac{1}{2}(|z_1|^2 - |z_2|^2) F$.

\begin{theorem}\label{thm:main2}
	Let $F:S^3\rightarrow \mathbb{R}$ be a smooth function that is invariant under the anti-Hopf action and define the function $A=\frac{1}{2}(|z_1|^2 - |z_2|^2) F$. Then, for every critical point $z$ of $A$,
	\begin{enumerate}[label=(\roman*)]
		\item $z$ lies on a Reeb orbit $\gamma$ of $F\lambda_0$.
		\item $\gamma$ is a Hopf orbit.
	\end{enumerate}	 
\end{theorem}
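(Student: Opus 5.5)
The plan is to read $A$ as the contact Hamiltonian of the anti-Hopf vector field with respect to $\alpha:=F\lambda_0$. Part (i) then comes from the Cartan formula. For part (ii), I will show that $\gamma$ is an anti-Hopf orbit, transversely isotopic to a coordinate circle, which is a Hopf fiber.

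\emph{Step 1: $A$ is the contact Hamiltonian of $V$.} Let $V$ be the infinitesimal generator of the anti-Hopf action $(z_1,z_2)\mapsto(e^{i\theta}z_1,e^{-i\theta}z_2)$. A direct computation from $\lambda_0=\frac{i}{4}\sum_j(z_jd\overline z_j-\overline z_jdz_j)$ gives
\[
\lambda_0(V)=\tfrac12(|z_1|^2-|z_2|^2), \qquad \mathcal L_V\lambda_0=0 .
\]
Since $F$ is invariant, $\mathcal L_V\alpha=0$ and $\alpha(V)=A$. Cartan's formula then gives $\iota_Vd\alpha=-dA$.

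\emph{Step 2: proof of (i).} Let $z$ be a critical point of $A$. Then $\iota_{V}d\alpha|_z=0$, so $V(z)\in\ker d\alpha|_z=\mathbb R R_\alpha(z)$. Because the action is free, $V(z)\neq 0$. Also $A(z)\neq0$, since otherwise $V(z)\in\ker\alpha\cap\ker d\alpha=0$. Hence $V(z)=A(z)R_\alpha(z)$.

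I then propagate this along the $S^1$-orbit through $z$. Both $A$ and $dA$ are $V$-invariant, so every point of the orbit is critical and has the same value $A(z)$. It follows that $R_\alpha=V/A(z)$ along the whole orbit. Therefore the orbit $\gamma(\theta)=(e^{i\theta}a,e^{-i\theta}b)$ of the circle action is a closed Reeb orbit. Its orientation is that of $V$ if $A(z)>0$ and that of $-V$ if $A(z)<0$.

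\emph{Step 3: proof of (ii).} The self-linking number depends only on the transverse knot and on $\xi=\ker\alpha=\xi_{std}$. I will show that $\gamma$ is transversely isotopic to a Hopf fiber, which has $\operatorname{sl}=-1$, as recalled above.

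Suppose $A(z)>0$, i.e.\ $|a|>|b|$. For $s\in[0,1]$, consider the anti-Hopf orbits through the normalized points
\[
\bigl(a,\,sb\bigr)/|(a,sb)|,
\]
each oriented by $V$. Along this family $\lambda_0(V)>0$, so every member is positively transverse to $\xi_{std}$. At $s=0$ the family reaches the circle $\{z_2=0\}$ traversed as $e^{i\theta}z_1$, which is a Hopf fiber with its Hopf orientation. Each member is a $(1,-1)$ torus knot or a core circle, hence unknotted. So $\gamma$ is unknotted with $\operatorname{sl}(\gamma)=-1$.

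The case $A(z)<0$ is symmetric. There the orientation is $-V$, and one deforms to $\{z_1=0\}$ traversed as $e^{i\theta}z_2$, again a Hopf fiber.

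\emph{Main obstacle.} The delicate point is the bookkeeping of orientations and signs. One must check that the Reeb orientation of $\gamma$ coincides with the positive transverse orientation along the entire isotopy. One must also check that the limiting coordinate circle carries the orientation for which $\operatorname{sl}=-1$, and not the reversed one. I would verify this explicitly by computing $\lambda_0(\pm V)$ along the family. As a cross-check, I would compute $\operatorname{sl}$ directly for $\gamma$, using the capping disk in the torus $\{|z_1|=|a|\}$ region and the trivialization of $\xi_{std}$ by $(-\overline z_2,\overline z_1)$.
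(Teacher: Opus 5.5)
Your proof is correct and follows the same route as the paper. The paper cites Vialaret's proposition, whose content is exactly your Cartan-formula identity $dA=-\iota_V d(F\lambda_0)$, to conclude that the anti-Hopf orbit through $z$ is a Reeb orbit, and then observes that this orbit is isotopic to a coordinate circle, i.e.\ a Hopf fiber. Your version is more careful than the paper's one-line argument in two respects. You check that the isotopy passes through positively transverse knots carrying the Reeb orientation $\operatorname{sign}(A(z))\,V$; a mere smooth isotopy would not control $\operatorname{sl}$. You also get the orientation right in the case $A(z)<0$, where the paper's limiting circle $(0,e^{-it})$ is written with the orientation of $V$ rather than the Reeb orientation.
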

We shall give the proof of Theorem \ref{thm:main2} at the end of this section.

\begin{corollary}
	Let $F:S^3\rightarrow \mathbb{R}$ be a smooth function that is invariant under the anti-Hopf flow and assume either $z=(1,0)$ or $z=(0,1)$ is a point of minimum for $F$. Then
	$$\mathcal{A}_{\mathrm{Hopf}}(\mathbb{X}_F)\leq c_{\mathrm{Gr}}(\mathbb{X}_F)$$
	
\end{corollary}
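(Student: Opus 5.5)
The plan is to copy the proof of Theorem \ref{thm:main}, with Theorem \ref{thm:main2} playing the role that critical points of $f$ played in the Hopf case. Assume without loss of generality that $z_0=(1,0)$ is a point of minimum of $F$; the case $(0,1)$ is symmetric, using the coordinate swap. Let $m=F(z_0)=\min_{S^3}F$.

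First I would show that $z_0$ is a critical point of $A=\tfrac12(|z_1|^2-|z_2|^2)F$ on $S^3$. Write $h=\tfrac12(|z_1|^2-|z_2|^2)$, so $dA=F\,dh+h\,dF$. At $z_0$ we have $dF(z_0)=0$ because $z_0$ is a minimum. Also $h|_{S^3}$ attains its maximum $\tfrac12$ at $z_0$, so $dh(z_0)=0$ on $T_{z_0}S^3$. Hence $dA(z_0)=0$.

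By Theorem \ref{thm:main2}, $z_0$ then lies on a Reeb orbit $\gamma$ of $F\lambda_0$, and $\gamma$ is a Hopf orbit. I would next identify $\gamma$ and its action. The anti-Hopf orbit through $z_0$ is the circle $\{(e^{i\theta},0)\}$, and $F$ is constant, equal to $m$, along it. I expect $\gamma$ to be this circle, with Reeb vector field a positive multiple of $R_0$ there. To check this, use the formula for the Reeb field of $F\lambda_0$: it equals $\frac1F(R_0+\text{correction involving }dF)$, where the correction is built from $dF|_{\xi}$. The correction vanishes wherever $dF=0$. Along the circle $F\equiv m$ and $F$ attains its minimum at every point, so $dF=0$ along the whole circle and the correction vanishes there. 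So $\gamma=\{(e^{i\theta},0)\}$, traversed once, and
\[
\mathcal{A}(\gamma)=\int_\gamma F\lambda_0=m\int_\gamma\lambda_0=\pi m .
\]

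Finally I would compare with a ball. Since $F\ge m$ on $S^3$, we have $\mathbb{X}_m\subseteq \mathbb{X}_F$. Here $\mathbb{X}_m=\{|\mathbf z|^2\le m\}=B^4(\pi m)$. Monotonicity of the Gromov width then gives
\[
\mathcal{A}_{\mathrm{Hopf}}(\mathbb{X}_F)\le \mathcal{A}(\gamma)=\pi m\le c_{\mathrm{Gr}}(\mathbb{X}_F).
\]

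The step that needs the most care is identifying the Reeb orbit through $z_0$ and its action. I would settle it directly by showing that $dF$ vanishes along the whole orbit, so the Reeb flow there coincides with the rescaled standard flow. This avoids relying on the details of the proof of Theorem \ref{thm:main2}.
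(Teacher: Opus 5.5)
Your proof is correct and is essentially the paper's argument. The paper only says the corollary follows directly from Theorem \ref{thm:main2}, and you supply the implicit steps: $(1,0)$ is critical for $A$ because both $F$ and $\tfrac12(|z_1|^2-|z_2|^2)$ are critical there; the resulting Reeb orbit is the Hopf fiber $\{(e^{i\theta},0)\}$ with action $\pi m$; and $B^4(\pi m)\subseteq\mathbb{X}_F$, exactly as in the proof of Theorem \ref{thm:main}. Your direct check that the Reeb field equals $R_0/m$ along that circle even makes the appeal to Theorem \ref{thm:main2} dispensable, since the orbit is literally a Hopf fiber.
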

\begin{proof}
This follows directly from Theorem \ref{thm:main2}.
\end{proof}

\begin{remark}
	Note that we cannot simply apply a transformation of $U(2)$ taking the desired critical point to $z=(1,0)$ or $z=(0,1)$. The proof of Theorem \ref{thm:main2} is highly dependent on which action we are taking, and a transformation of $U(2)$ does not preserve the action in general.
\end{remark}

Let $F:S^3\rightarrow \mathbb{R}$ be a smooth function that is invariant under the $(z_1,z_2)\mapsto(e^{i\theta}z_1,e^{-i\theta}z_2)$ action. This action is generated by the vector field $V=\partial_{\theta_1}-\partial_{\theta_2}$ in polar coordinates. Similarly to \cite{vialaret2024sharpsystolicinequalitiesinvariant}, define $$A=i_{V}(F\lambda_0)=\frac{1}{2}(|z_1|^2-|z_2|^2)F$$

The critical points of $A$ correspond to points where $R_{F}$, the Reeb flow associated with $F\lambda_0$, is parallel to $V$, i.e., these are anti-Hopf fibers. More precisely, we have:

\begin{proposition}(\cite{vialaret2024sharpsystolicinequalitiesinvariant})\label{prop: action}
	Let $\alpha$ be a contact form on $S^3$ that is invariant under the free $S^1$-action generated by $V$. Define $A=i_V\alpha$ and its projection $\bar{A}$ to $S^2=S^3/S^1$. The differential of $A$ is $dA=-i_V d\alpha$, and the critical points of $\bar{A}$ are in bijection with the $S^1$-orbits of critical points of $A$. These $S^1$-orbits coincide as sets with Reeb orbits. The associated critical value of $A$ is proportional to the minimal period of the corresponding Reeb orbit (up to sign), depending on whether the orientations of the Reeb orbit and the $S^1$-orbit coincide.
\end{proposition}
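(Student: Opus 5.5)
The proof rests on Cartan's formula applied to the two defining properties of $\alpha$. By hypothesis $\mathcal{L}_V\alpha=0$, so
\[
0=\mathcal{L}_V\alpha=d(i_V\alpha)+i_Vd\alpha=dA+i_Vd\alpha ,
\]
which gives $dA=-i_Vd\alpha$. The same identity also gives $\mathcal{L}_VA=i_VdA=-d\alpha(V,V)=0$. Hence $A$ is constant along $S^1$-orbits and descends to a smooth function $\bar A$ on the quotient $S^2=S^3/S^1$. The action is free, so the quotient map $q$ is a submersion and a principal circle bundle. From $A=\bar A\circ q$ we get $dA_z=d\bar A_{q(z)}\circ dq_z$, and since $dq_z$ is surjective, $z$ is critical for $A$ exactly when $q(z)$ is critical for $\bar A$. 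Critical points of $\bar A$ therefore correspond bijectively to full $S^1$-orbits of critical points of $A$.

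Next I identify these orbits with Reeb orbits. At a point $z$, the condition $dA_z=0$ is equivalent to $V_z\in\ker d\alpha_z$. On a contact 3-manifold this kernel is one-dimensional and spanned by $R_\alpha(z)$, so we can write $V_z=c(z)R_\alpha(z)$. Pairing with $\alpha$ gives $c(z)=\alpha(V_z)=A(z)$.

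The delicate point, and the one I expect to be the main obstacle, is ruling out $A(z)=0$, because this is what lets $V$ be a nonzero multiple of the Reeb field. Since $V$ generates a free action, $V_z\neq0$. If $A(z)=0$ we would have $V_z\in\ker\alpha_z\cap\ker d\alpha_z$. But $d\alpha$ is nondegenerate on $\ker\alpha$, so this intersection is zero, which contradicts $V_z\neq0$.

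With $A(z)\neq0$ settled, everything else follows along the orbit. The function $A$ is constant on the $S^1$-orbit $\mathcal{O}$ through $z$, and every point of $\mathcal{O}$ is critical, so $V=A(z)R_\alpha$ holds along all of $\mathcal{O}$. Thus $\mathcal{O}$ is invariant under the Reeb flow, and as a set it coincides with the image of a closed Reeb orbit, simply covered because the action is free. Normalize $S^1=\mathbb{R}/\mathbb{Z}$, so that $\mathcal{O}$ is traversed by $V$ in time $1$. Then $R_\alpha=V/A(z)$ traverses it in time $|A(z)|$. Equivalently, the minimal period is
\[
T=\int_{\mathcal{O}}\alpha=\int_0^1\alpha(V)\,dt=A(z)
\]
up to sign. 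The sign is positive or negative according to whether the orientation of $V$ along $\mathcal{O}$ agrees with that of $R_\alpha$, i.e. the sign of $A(z)$. This gives the claimed proportionality of the critical value to the minimal period, and completes the proof.
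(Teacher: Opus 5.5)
Your proof is correct and complete: Cartan's formula gives both $dA=-i_Vd\alpha$ and the $S^1$-invariance of $A$; critical points are exactly where $V_z\in\ker d\alpha_z=\langle R_\alpha(z)\rangle$; and $V=A(z)R_\alpha$ with $A(z)\neq 0$ along the orbit gives minimal period $|A(z)|$ for $S^1=\mathbb{R}/\mathbb{Z}$, or $2\pi|A(z)|$ for the paper's $2\pi$-periodic generator $V=\partial_{\theta_1}-\partial_{\theta_2}$. The paper gives no proof of its own and only cites Vialaret, but your argument is the standard one and matches the paper's remark that critical points of $A$ are precisely the points where the Reeb field is parallel to $V$.
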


To compute the Reeb vector field $R$, we use the splitting $TS^3=\op{Span}(R_0)\oplus \xi_{std}$, and write $R=aR_0+b W$ with $W\in \ker \lambda_0$. In particular, if $dF=0$ at some point $z$, then
 
 \begin{equation*}
 	0=\text{i}_{R}d(F\lambda_0)=Fi_{R}d\lambda_0=F i_{bW}d\lambda_0
 \end{equation*}
Since $W\in \ker \lambda_0$, we have $R=\frac{1}{F}R_0$ at this point. However, unless $F$ is also invariant under the Hopf flow, the flow $R_{F}$ along the orbit of $z$ is not necessarily a rescaling of $R_0$.

We now use this result to prove Theorem \ref{thm:main2}.
\begin{proof}[Proof of Theorem \ref{thm:main2}]
	Let $z\in S^3$ be a critical point of $A$, by Proposition \ref{prop: action}, $z$ is part of a Reeb orbit $\gamma$ which is a rescaling of an anti-Hopf fiber. Now, observe that $\gamma$ is a Hopf orbit, because every anti-Hopf fiber, which is also a Reeb orbit, is isotopic to either ${(e^{it},0)}$ or ${(0,e^{-it})}$,i.e.,  $\gamma$ is isotopic to a Hopf fiber, then is unknotted and has self-linking number -1.  
\end{proof}

\section{Dynamically convex $S^1$-invariant domains}\label{sec: dyn_conv}

In this section, as in \cite{Gutt2022Examples}, we provide a characterization of Morse--Bott functions $f:\bbC P^1\to \bbR$ such that $(S^3,\lambda_f=e^{p^*f}\lambda_0)$ is dynamically convex. Unless otherwise stated, we will consider $F=e^{p^*f}$ and $\mathbb{X}_F$ its corresponding domain. Let $x\in\mathbb{CP}^1$ be an isolated critical point of $f$ and $\gamma_x$ the corresponding Reeb orbit for $\lambda_f$. We will refer to these orbits as \textit{distinguished Reeb orbits}. Observe that if $f$ is a Morse--Bott function, any other Reeb orbits come in $S^1$-families. We call them \textit{undistinguished Reeb orbits}. We will use a global trivialization of $\xi=\ker \lambda_0$ as follows. As in \cite{Chaidez4dconvexpolytopes}, we use the quaternions to define $\tau_0:\xi \xrightarrow{}S^3\times \bbR^2$. We see $S^3\subset \R^4=\C^2=\mathbb{H}$ under the usual identification and let $\{1,i,j,k\}$ be the usual basis of $\R^4$. Then for each $z\in S^3$, it follows from a simple calculation that $R_0|_z=iz$ and that $\xi_z=\textrm{span}(jz,kz)$. The trivialization $\tau_0$ is defined by 
$$\tau_0(v):=(z,<v,jz>,<v,kz>).$$

For the calculations in this paper, we will only use this trivialization and we will set $CZ(\gamma)=CZ_{\tau_0}(\gamma)$. 

 In the first part, we are going to explicitly calculate the Conley--Zehnder index for distinguished orbits using the Robbin-Salamon index depending on the Hessian of $f$. In the second section, we calculate the holonomy of the connection $\lambda_0$ with respect to the flow of $\widetilde{X}_f$. After that, we relate this number to action angle coordinates of the integrable system generated by $f$ and $|z|^2$. We use this description in toric coordinates to calculate the rotation number of undistinguished orbits. Finally, we use the toric description to give necessary and sufficient conditions for $\lambda_f$ to be dynamically convex.

\subsection{Conley--Zehnder of distinguished orbits}

We first recall the definition of the Robbin-Salamon index. The Robbin-Salamon index is a generalization of the Conley--Zehnder index for a more general path of symplectic matrices. For
$$\Psi\in \Sigma(n):=\{\Psi:[a,b]\xrightarrow{}Sp(n)\,\,|\,\,\Psi\text{ is smooth}\},$$
we say that a number $t\in [a,b]$ is a \textit{crossing} if $\det(\Psi(t)-\mathbb{I})=0$. For a crossing $t$ we define $E_t:=\ker(\Psi(t)-\mathbb{I})$ and the \textit{crossing form} being a quadratic form on $E_t$ given by
$$\Gamma(\Psi,t)(v):=d\lambda(v,\dot{\Psi}v), \,\,\,\,\, \text{for}\,\, v \in E_t.$$ 
A crossing $t$ is said to be \textit{regular} if the crossing form associated to it, is a non-singular quadratic form, i.e. it is non-zero for every vector on $E_t$. Observe that regular crossings are isolated, and any path can be modified by a homotopy with fixed endpoints to a path that has only regular crossings, see \cite{ROBBIN1993}.
The \textit{Robbin-Salamon} index of a path $\Psi:[a,b]\to Sp(n)$ with only regular crossings is defined by
$$\mu_{RS}(\Psi):=\frac{1}{2}\sum_{\substack{t\in\{a,b\} \\ t\ \text{crossing}}}
\text{sign}(\Gamma(\Psi,t))
+\sum_{\substack{a<t<b \\ t\ \text{crossing}}}
\text{sign}(\Gamma(\Psi,t)),$$ 
where sign denotes the signature of the quadratic form, i.e. the number of positive eigenvalues minus the number of negative eigenvalues.
\begin{proposition}(\cite{ROBBIN1993})
	The Robbin-Salamon index satisfies the following properties:
	\begin{enumerate}[label=(\roman*)]
		\item Let $a<b<c$, then
		
		$$\mu_{RS}(\Psi|_{[a,c]})=\mu_{RS}(\Psi|_{[a,b]})+\mu_{RS}(\Psi|_{[b,c]}).$$
		\item Suppose that $\Phi\in \Sigma(n)$ and that there exists a homotopy with fixed end points between $\Psi$ and $\Phi$, then
		
		$$\mu_{RS}(\Psi)=\mu_{RS}(\Phi).$$
		\item Suppose now that $\Phi\in \Sigma(m)$, then $\Psi\oplus\Phi \in \Sigma(n+m)$ and
		
		$$\mu_{RS}(\Psi\oplus\Phi)=\mu_{RS}(\Psi)+\mu_{RS}(\Phi).$$
	\end{enumerate}
	
\end{proposition}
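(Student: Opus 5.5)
The plan is to prove the three properties directly from the crossing-form definition, working first with paths having only regular crossings and then using (ii) to remove that assumption. Since every path is homotopic with fixed endpoints to one with only regular crossings \cite{ROBBIN1993}, homotopy invariance is also what makes $\mu_{RS}$ well defined for arbitrary paths. So I would prove (ii) first, for paths with regular crossings, and then deduce (i) and (iii).

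For (ii), I would reformulate in Lagrangian terms. Write $\mathrm{Gr}(\Psi(t))\subset(\R^{2n}\oplus\R^{2n},-\omega_0\oplus\omega_0)$ for the graph, and $\Delta$ for the diagonal. Then crossings of $\Psi$ are exactly the times where $\mathrm{Gr}(\Psi(t))\cap\Delta\neq 0$, and this intersection is identified with $E_t$. One checks that the Lagrangian crossing form of the pair $(\mathrm{Gr}\Psi,\Delta)$ coincides with $\Gamma(\Psi,t)$ under this identification.

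The key local step is stability of the count near a crossing. Near a regular crossing $t_0$ I would write $\Psi(t)=\exp(J_0S(t))\Psi(t_0)$ with $S(t)$ symmetric, so that the crossing form is the restriction of $\dot S(t_0)$ to $E_{t_0}$. A small perturbation of the path replaces $t_0$ by finitely many nearby crossings, and I would show that the sum of their signatures equals $\mathrm{sign}\,\Gamma(\Psi,t_0)$. This follows by counting eigenvalues of a symmetric family that cross zero: the net number of eigenvalues changing sign is the signature of the derivative. Given this local statement, a homotopy with fixed endpoints can be subdivided into small steps. At each step the interior contribution is unchanged, and because the endpoints are fixed the half-weighted endpoint terms are unchanged too, which gives (ii).

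I expect this local-stability argument to be the main obstacle. It is especially delicate at the endpoints, where the $\tfrac12$ weighting must be matched against crossings that could a priori drift across $t=a$ or $t=b$. I would handle this by working with a fixed-endpoint homotopy: a nearby crossing cannot leave the interval, because the endpoint matrices do not move.

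With (ii) in hand, (i) is combinatorial. First perturb $\Psi$, keeping the endpoints $\Psi(a),\Psi(b),\Psi(c)$ fixed, so that all crossings are regular. If $b$ is not a crossing, the crossings simply split between $[a,b]$ and $[b,c]$. If $b$ is a crossing, it is an interior crossing of $[a,c]$ counted with weight $1$, and it appears as an endpoint of both subintervals with weight $\tfrac12$ each, so the totals agree.

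For (iii), the crossings of $\Psi\oplus\Phi$ are the union of the crossings of $\Psi$ and of $\Phi$. At each crossing we have $E_t=E_t^\Psi\oplus E_t^\Phi$, and the crossing form is the orthogonal sum of the two crossing forms, since the symplectic form splits. Signature is additive, so regularity of each summand gives regularity of the sum, and the indices add. The general case then follows by perturbing $\Psi$ and $\Phi$ separately and applying (ii).
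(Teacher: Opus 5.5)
The paper gives no proof of this proposition; it is quoted from Robbin--Salamon, so your sketch has to stand on its own. Your deductions of (i) and (iii) from (ii) are fine. A crossing at $b$ has weight $1$ in $[a,c]$ and $\tfrac12+\tfrac12$ in the two halves, and for $\Psi\oplus\Phi$ both $E_t$ and the crossing form split orthogonally.

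The gap is in (ii), exactly where you expected trouble, and your proposed resolution is wrong. The half-weighted endpoint terms are \emph{not} invariant under homotopies with fixed endpoints. The crossing form $v\mapsto\omega_0(v,\dot\Psi(a)v)$ on $E_a$ depends on $\dot\Psi(a)$, which such a homotopy does not fix. For instance, with $n=1$ take $\Psi_s(t)=\exp(J_0\theta_s(t))$ on $[0,1]$, where $\theta_s(t)=(1-2s)t+2st^2$. Then $\theta_s(0)=0$, $\theta_s(1)=1$ and $\theta_s([0,1])\subset[-\tfrac18,1]$, so the crossings are exactly the zeros of $\theta_s$. Up to a convention-dependent overall sign, the crossing form at $t=0$ is $(1-2s)|v|^2$. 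So the endpoint term flips sign at $s=\tfrac12$, where that crossing is degenerate. The index survives only because, for $s>\tfrac12$, an interior crossing at $t=(2s-1)/(2s)$ is born out of the endpoint. Crossings therefore do enter and leave through a fixed endpoint.

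Independently of this, ``subdivide the homotopy into small steps'' presupposes that every intermediate $\Psi_s$ has only regular crossings, so that its crossing sum is defined. In general this fails: $\Psi_{1/2}$ above already fails, and an intermediate path may have degenerate or even infinitely many crossings. Your local lemma compares a regular path only with regular perturbations of itself near its own crossings, so it cannot be chained through a non-regular $\Psi_s$. Also, the family $S(t)$ defined by $\Psi(t)=\exp(J_0S(t))\Psi(t_0)$ computes the crossing form at $t_0$ only. For $t\neq t_0$, $\ker(\Psi(t)-I)$ is not the kernel of $S(t)$, so this is not the family whose eigenvalues you should count.

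What is missing is the localization formula of Robbin--Salamon. Near any $t_0$, choose a Lagrangian $W\subset(\R^{2n}\oplus\R^{2n},-\omega_0\oplus\omega_0)$ transverse to $\Delta$ and to $\mathrm{Gr}\,\Psi(t)$ for all $t$ in a window $[t_-,t_+]$. Writing $\mathrm{Gr}\,\Psi(t)$ as the graph of a map $\Delta\to W$ gives a symmetric family $C(t)$ on $\Delta$ with $\ker C(t)\cong E_t$. Moreover, $\dot C(t)|_{\ker C(t)}$ equals the crossing form at \emph{every} $t$ in the window. For any path with regular crossings there, the weighted crossing sum over $[t_-,t_+]$, endpoint crossings included, equals $\tfrac12\operatorname{sign}C(t_+)-\tfrac12\operatorname{sign}C(t_-)$.

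This expression depends only on the values at $t_\pm$ and makes sense for every path. Replacing $W$ by another admissible $W'$ changes $\operatorname{sign}C$ by a locally constant function. Hence the formula defines $\mu_{RS}$ for all paths and agrees with the crossing sum on regular ones. It also telescopes to zero around each small rectangle of a grid subordinate to the homotopy. Summing over the grid gives (ii), because the sides $s\mapsto\Psi_s(a)$ and $s\mapsto\Psi_s(b)$ are constant. That last point, that $\operatorname{sign}C(a)$ and $\operatorname{sign}C(b)$ do not depend on $s$, is the correct version of your endpoint remark.
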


We now recall the computation of the Robbin-Salamon index of Reeb orbits for the standard contact form on $S^3$. As with the Conley--Zehnder index, for a Reeb orbit $\gamma$, we define $\mu_{RS}(\gamma)$ to be $\mu_{RS}(\Psi)$, where $\Psi:[0,T]\to Sp(n)$ is the linearization of the Reeb flow $R_{0}$ restricted to the contact structure $\xi_0$ under the global trivialization, as defined above. Notice that we have $CZ(\gamma)-\mu_{RS}(\gamma)\in[0,1]$ for any orbit  $\gamma$. 

\begin{lemma}\label{lemma:nelson}(see \cite{Nelson2017AutomaticTI})
Let $\gamma$ be a simple closed orbit of $R_0$ in $S^3$. Then, for every $k>0$, we have
	$$\mu_{RS}(\gamma^k)=4k.$$
\end{lemma}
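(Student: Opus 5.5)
We compute the linearized Reeb flow of $\lambda_0$ explicitly in the quaternionic trivialization $\tau_0$ and then apply the crossing-form definition of $\mu_{RS}$.

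\textbf{The flow and its linearization.} Since $R_0|_z=iz$ and $\lambda_0$ is normalized so that $\pi$ is the action of a Hopf fiber, the Reeb flow is $\phi_t(z)=e^{ct}z$ for an appropriate constant $c$ (with our conventions $c=2$, so that a simple orbit has period $\pi$). Multiplication by the complex scalar $e^{ct}$ is left multiplication in $\mathbb{H}$, so $d\phi_t(v)=e^{ct}v$. For $v=jz$ we get $d\phi_t(jz)=e^{ct}jz$. We must express this in the basis $j\phi_t(z), k\phi_t(z)$, where $j\phi_t(z)=je^{ct}z=e^{-ct}jz$ by the anticommutation $ji=-ij$. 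Hence
\[
e^{ct}jz=e^{2ct}\,j\phi_t(z)=\cos(2ct)\,j\phi_t(z)+\sin(2ct)\,ij\phi_t(z),
\]
and $ij=k$. So in the trivialization $\tau_0$ the path is $\Psi(t)=$ rotation by angle $2ct$. A simple orbit has $ct$ ranging over $[0,2\pi]$, so $\Psi$ rotates by total angle $4\pi$, and $\gamma^k$ rotates by total angle $4\pi k$. The main care needed is getting the sign and orientation right: the basis $(jz,kz)$ must be oriented compatibly with $d\lambda_0$, and the rotation direction must be positive.

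\textbf{Crossings.} The path $\Psi(t)=e^{J\theta(t)}$ with $\theta$ increasing has crossings exactly when $\theta\in 2\pi\mathbb{Z}$, where $\Psi=\mathbb{I}$ and $E_t=\mathbb{R}^2$. The crossing form is $\omega(v,J\dot\theta v)=\dot\theta|v|^2$ (with the sign convention matching $d\lambda_0$), which is positive definite of signature $2$. All crossings are therefore regular.

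\textbf{Counting.} For $\gamma^k$ the crossings occur at $\theta=0,2\pi,\dots,4\pi k$. The two endpoints contribute $\tfrac12\cdot2+\tfrac12\cdot2=2$. The $2k-1$ interior crossings contribute $2(2k-1)$. The total is $2+4k-2=4k$. Equivalently, one can use additivity (property (i)) over the $2k$ full turns, each contributing $\mu_{RS}=2$. The only real obstacle is the orientation check in the first step, which fixes the sign of the result.
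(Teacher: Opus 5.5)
Your proof is correct. The paper gives no argument of its own for this lemma; it only cites Nelson. Your direct computation therefore supplies what the paper outsources. In the trivialization $\tau_0$, the linearized flow along a fiber is a rotation at twice the angular speed of the fiber, so each simple traversal contributes a full $4\pi$, and counting crossings gives $4k$.

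Doing it by hand has two advantages. It happens in the paper's own trivialization. It also makes visible that the answer does not depend on the normalization constant $c$. That matters here because the paper writes $R_0|_z=iz$, but with its $\lambda_0=\tfrac12\sum_j(x_jdy_j-y_jdx_j)$ one actually has $R_0=2iz$ on the unit sphere, which is what matches the stated period $\pi$. The citation, by contrast, is shorter but leaves these convention checks to the reader.

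Three small points are needed to make your write-up complete.

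\textbf{Notation.} $e^{ct}$ should read $e^{ict}$. You should also state that $\mathbb{C}^2\cong\mathbb{H}$ via $(z_1,z_2)\mapsto z_1+z_2j$, so that complex scalar multiplication is left quaternionic multiplication. This is what makes $\xi_z=\mathrm{span}(jz,kz)$ hold.

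\textbf{Second column.} You computed only the image of $jz$. The image of $kz$ follows because $d\phi_t$ is left multiplication by $e^{ict}$, which commutes with left multiplication by $i$, and $i\cdot jz=kz$. So $\Psi(t)$ really is a rotation.

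\textbf{Orientation check.} The check you flag takes one line. We have $d\lambda_0=\omega_0$ and $\omega_0(u,v)=\langle iu,v\rangle$, so $d\lambda_0(jz,kz)=\langle kz,kz\rangle=1$. Thus $\tau_0$ carries $d\lambda_0$ to the standard form $dx\wedge dy$ and your rotation is counterclockwise. The crossing form $d\lambda(v,\dot\Psi v)=\dot\theta|v|^2$ is then positive definite, as you claimed, and the sign of $4k$ is settled.
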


This lemma accounts for the contribution of the part ``$R_0$" of our flow, as seen in \eqref{eq: preqflow}. We now compute the contribution of the Hamiltonian flow in order to prove the following theorem.

\begin{theorem}\label{thm:RSdis}
	Let $f$ be a Morse--Bott function on $\mathbb{CP}^1$ and let $\gamma$ be the distinguished Reeb orbit of $(S^3,\lambda_f)$ associated to a nondegenerate critical point of $f$. If $\gamma^k$ is nondegenerate, then 
	\begin{equation}\label{eq:RSdis}
		CZ\left(\gamma^k\right)=4k-\left(\frac{1}{2}+\left\lfloor\frac{ k\sqrt{|\lambda_1^f\lambda_2^f|}}{2}\right\rfloor\right)\textup{sign}\left(\textup{Hess}_xf\right),
	\end{equation}
where $\lambda_i^f$ are the eigenvalues of $\op{Hess}_{x}f$ with respect to the Fubini-Study metric. 

If $\gamma^k$ is degenerate, then
$$\mu_{RS}\left(\gamma^k\right)=4k-\frac{ k\sqrt{|\lambda_1^f\lambda_2^f|}}{2}\textup{sign}\left(\textup{Hess}_xf\right).$$

 In particular, if $\gamma$ is a nondegenerate Reeb orbit projecting to a saddle point of $f$, then $CZ(\gamma^k)=4k$ and therefore it is positive hyperbolic.
\end{theorem}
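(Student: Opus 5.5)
Compute the linearized Reeb flow of $\lambda_f$ along $\gamma_x=p^{-1}(x)$ in the quaternionic trivialization $\tau_0$, split it into the standard $R_0$-part (handled by Lemma \ref{lemma:nelson}) and a Hamiltonian correction coming from $\widetilde{X}_f$, and then add Robbin--Salamon indices.

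First, along $\gamma_x$ we have $dF=0$, so by \eqref{eq: preqflow} the orbit is $t\mapsto e^{it/F(x)}z_0$. Its period is $T=\pi e^{f(x)}$ in the normalization where the Hopf fiber has action $\pi$. I would reparametrize time so that the Reeb field is $R_0+\widetilde X_f$ up to the constant factor $1/F(x)$; since $F$ is constant on the fiber, this does not affect the index. The flows of $R_0$ and $\widetilde X_f$ commute, because $\widetilde X_f$ is the horizontal lift of an $S^1$-invariant vector field and $R_0$ generates the $S^1$-action. Hence the linearized flow on $\xi$ factors as $d\phi^t_{R_0}\circ d\phi^t_{\widetilde X_f}$.

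The key identification is as follows. Under $dp:\xi_z\cong T_x\mathbb{CP}^1$, the linearization of $\widetilde X_f$ at the fixed fiber is the horizontal lift of the linearized Hamiltonian flow $\exp(tJ\,\mathrm{Hess}_xf)$ on $T_x\mathbb{CP}^1$. Transporting this back along the fiber by $d\phi_{R_0}$, it becomes, in $\tau_0$, a constant-coefficient path $\Phi(t)=\exp(tJS)$ with $S$ symmetric and $\operatorname{spec}(JS)=\pm\sqrt{-\lambda_1^f\lambda_2^f}$ (with respect to Fubini--Study), times a factor for the time rescaling. The total linearized flow is then the product $\Psi_0(t)\Phi(t)$. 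Since $\Psi_0$ is a loop-type path with $\mu_{RS}=4k$ that commutes with $\Phi$, a homotopy with fixed endpoints, using Proposition (ii) and (i), gives $\mu_{RS}(\gamma^k)=4k+\mu_{RS}(\Phi|_{[0,kT]})$.

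The step I expect to be the main obstacle is getting the exact scaling constant, namely checking that $\Phi$ over one period rotates by angle $\pi\sqrt{|\lambda_1\lambda_2|}$. This produces $\lfloor k\sqrt{\cdot}/2\rfloor$, so the normalization conventions of the Fubini--Study metric, the $\pi$ in the Hopf action, and the factor $e^{f(x)}$ in the period all have to be tracked. I would first test the constants on the ellipsoid example, where the distinguished orbits and their indices are known.

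Finally, compute $\mu_{RS}$ of $\exp(tJS)$ by crossings, in three cases.
\begin{itemize}
\item If $S$ is definite, the path is an elliptic rotation with sign $\mp$ according to $\mathrm{sign}(\mathrm{Hess}_xf)=\pm2$. Crossings occur at multiples of a full turn, each contributing $\operatorname{sign}$ of the crossing form. The start counts $\tfrac12$, the interior crossings give $\lfloor k\sqrt{\cdot}/2\rfloor$, and a crossing at the endpoint happens exactly when $\gamma^k$ is degenerate. After passing to the lower semicontinuous $CZ$, this gives \eqref{eq:RSdis} in the nondegenerate case and the linear formula in the degenerate case.
\item If $x$ is a saddle, $\mathrm{sign}=0$ and $JS$ is hyperbolic, so there are no crossings except at $t=0$, where the contribution is $\tfrac12\cdot 0=0$. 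Hence $CZ=4k$, and the eigenvalues are real and positive, so the orbit is positive hyperbolic.
\end{itemize}
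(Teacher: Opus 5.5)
Your proposal is correct and follows the paper's proof. The linearized flow along $\gamma_x$ factors as the standard $R_0$-part (Nelson's lemma, contributing $4k$) times the constant-coefficient Hamiltonian part $\exp(tJS)$ obtained through $dp_z$; the product is replaced by a concatenation, and the Hamiltonian contribution is counted by crossings, where the paper instead cites \cite[Proposition 41]{gutt2012conleyzehnderindexpathsymplectic}. One correction: the concatenation step works because $\Psi_0$ is a loop on the reparametrized interval $[0,k\pi]$ (use the square $(u,v)\mapsto\Psi_0(u)\Phi(v)$), not because $\Psi_0$ and $\Phi$ commute as matrices, which in $\tau_0$ fails in general (it holds only when $\mathrm{Hess}_xf$ is a multiple of the metric).
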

\begin{proof}
Let $R_{f}$ be the Reeb vector field of $\lambda_f=e^{p^*f}\lambda_0$, let $\xi=\ker\lambda_f$ and fix $z\in p^{-1}(x)$. Since the Reeb vector field is transverse to the contact structure, there is a decomposition
	$$T_{(s,w)}(\bbR\times S^3)=\bbR\oplus \left<R_{\lambda_f}(z)\right>\oplus \xi_w.$$
Let $\varphi^t_f$ denote the flow of $R_0+\widetilde{X}_f$ under the global quaternionic trivialization. Then
	
	$$d\varphi^t_f(s,w)=\begin{pmatrix}
		\begin{pmatrix}
			1&0\\
			0&1
		\end{pmatrix}&\\
		&d\varphi^t_f|_{\xi_w}
	\end{pmatrix}$$
Now, let $\Phi_f(t)=d\varphi^t_f|_{\xi_z}$. Since $f$ is constant along the distinguished orbit, the linearized flow of $R_0+\widetilde{X}_f$ is simply a rescaling of the linearized flow of $R_f=e^{-p^*f}(R_0+\widetilde{X}_f)$. Therefore, we can use $\Phi_f(t)$ to compute the Robbin-Salamon index. Now, we denote by $\Phi_0(t)$ and $\widetilde{\Psi}_f(t)$ the linearized flows of $R_0$ and $\widetilde{X}_f$ restricted to the contact structure, respectively. We reparametrize the paths above so that they are all defined over the interval $[0,T]$, where $T=k\pi$ is the period of $\gamma^k$. 

Note that the flows $R_0$ and $\widetilde{X}_f$ commute because $\widetilde{X}_f$ was defined to be invariant under the Hopf flow. Therefore, the linearized flows also commute and $\Phi_f(t)=\Phi_0(t)\widetilde{\Psi}_f(t)$. Consequently,
$$\mu_{RS}(\Phi_f)=\mu_{RS}(\Phi_0\widetilde{\Psi}_f).$$
Since the product path $\Phi_0\widetilde{\Psi}_f|_{[0,T]}$ is homotopic to the concatenation of $\Phi_0|_{[0,T]}$ and $\widetilde{\Psi}_f|_{[0,T]}$, it follows that
	\begin{equation*}
		\mu_{RS}(\Phi_f)=\mu_{RS}(\Phi_0\widetilde{\Psi}_f)=\mu_{RS}(\Phi_0)+\mu_{RS}(\widetilde{\Psi}_f).
	\end{equation*}
By Lemma \ref{lemma:nelson}, the first summand is $4k$. As for the second summand, we need to understand the linearization of the flow of the horizontal lift $\widetilde{X}_f$ of $X_f$ in $S^3$. For this, let $dp:TS^3\xrightarrow{} TS^2 $ be the usual tangent bundle projection map, we can define a splitting of $T_zS^3=V_z\oplus H_z=\op{Span}(R_0)\oplus \xi_z$, where $V_z:=\ker dp_z$ is called the \textit{vertical subspace} and  $H_{(z,v)}$ the \textit{horizontal subspace}\footnote{this is a choice of an Ehresmann connection.}. Moreover, $dp$ induces a symplectomorphism between $(\xi_z,d
\lambda_0)$ and $(T_{\pi(z)}\mathbb{CP}^1,\omega)$. On the other hand, let $\psi_t$ be the flow of $X_f$ and $\widetilde{\psi}_t$ the flow for $\widetilde{X}_f$, then we have the following commuting diagram of symplectomorphisms.

\begin{center}
	\begin{tikzpicture}
		\matrix (m)
		[
		matrix of math nodes,
		row sep    = 3em,
		column sep = 4em,
		nodes={anchor=center}
		]
		{   			\xi_z          &  \xi_{\widetilde{\psi}_t(z)}\\
			T_{p(z)}\mathbb{CP}^1 &       T_{\psi_t(p(z))}\mathbb{CP}^1\\  };
		\path
		(m-1-1) edge [->] node [left] {$dp_z$} (m-2-1)
		
		(m-1-2)
		edge [->] node [right] {$dp_{\widetilde{\psi}_t(z)} $} (m-2-2)

		(m-1-1.east) edge [->] node [above] {$d \widetilde{\psi}_t|_{\xi_z}$} (m-1-2)
		
		(m-2-1)edge [->] node [below] {$d\psi_t$} (m-2-2);
		
	\end{tikzpicture}
\end{center}
Now, take $z\in S^3$ such that $p(z)=x$. We then need to relate the path of symplectic matrices $\widetilde{\Psi}_f(t)=\tau_0\circ (d\widetilde{\psi}_t|_{\xi}(x))\circ \tau_0^{-1}$ and the Hessian of $f$. In particular, as $\tau_0$ is a global trivialization, it induces a trivialization $\bar{\tau}$ of $(T_x \mathbb{CP}^1,\omega)$ such that $\bar{\tau} \circ dp=dp\circ \tau$ and the induced path $\Psi(t)=\bar{\tau}\circ (d\psi_t(x))\circ \bar{\tau}^{-1}$ is equal to $\tilde{\Psi}(t)$ up to a conjugation. Therefore, they have the same Robbin-Salamon index. In fact, the quaternionic trivialization induces a natural compatible almost complex structure $J$ for the symplectic vector bundle $(\xi,d\lambda_0)$, which induces a compatible almost complex  structure $\bar{J}$ in $(\mathbb{CP}^1,\omega)$. Moreover, Kähler coordinates are all equivalent in a neighborhood of the point $x$. Therefore, we can choose Kähler coordinates on $\mathbb{CP}^1$ around the point $x$ such that the linearized flow is given by the equation
$$\dot{\Psi}=-J_0\nabla^2 f\cdot \Psi,$$
on those coordinates, where $\nabla^2 f$ is the Hessian matrix of $f$ with respect to the Fubini-Study metric.  Using a standard calculation from \cite[Proposition 41]{gutt2012conleyzehnderindexpathsymplectic}, and observing that the nondegeneracy of $\gamma^k$
implies that $t=k\pi$ is not a crossing for $\Psi$, we conclude that the Robbin-Salamon index of the corresponding path of symplectic matrices is equal to
$$\mu_{RS}(\Psi)=-\left(\frac{1}{2}+\left\lfloor\dfrac{T\sqrt{|\lambda_1^f\lambda_2^f|}}{2\pi}\right\rfloor\right)\textup{sign}\left(\textup{Hess}_xf\right)=-\left(\frac{1}{2}+\left\lfloor\dfrac{k\sqrt{|\lambda_1^f\lambda_2^f|}}{2}\right\rfloor\right)\textup{sign}\left(\textup{Hess}_xf\right).$$
If $\lambda_1^f\lambda_2^f<0$, there is no crossing but $t=0$ and the same formula works.  
 
By the same calculation as above, if $\gamma^k$ is degenerate and $\lambda_1^f\lambda_2^f>0$, then $t=k\pi$ is a crossing for $\Psi$ and

$$\mu_{RS}(\Psi)=-\frac{ k\sqrt{|\lambda_1^f\lambda_2^f|}}{2}\textup{sign}\left(\textup{Hess}_xf\right).$$

\end{proof}

\subsection{Conley--Zehnder index of undistinguished orbits}

In order to compute the Conley--Zehnder index of undistinguished orbits, we will first construct toric coordinates. Let $f:\mathbb{CP}^1\to\mathbb{R}$ be a Morse--Bott function. Denote by $\Lambda_0$ the union of the connected components of $f^{-1}(\text{crit}(f))$ that contain a critical point and observe that $\bbC P^1\backslash \Lambda_0$ is a finite disjoint union of cylinders. Take any of those cylinders and denote it by $\mathcal{U}$. Under the identification
$$\mathcal{U}\cong \bbR/\pi\bbZ\times (y_0,y_1),$$
we can assume that $f(x,y)=y$. Let $S_y=(\bbR/\pi\bbZ)\times\{y\}\subset \mathcal{U}$ for $y\in (y_0,y_1)$, and for each $y$ we define $h(y)$ to be $\omega$-area of one of the connected component of $\bbC P^1\backslash S_y$. We first choose $h(y)$ to be an increasing function on $y$. We observe that $h(y)$ is the holonomy of the connection $\lambda_0$ over the circle $S_y$. Indeed, the holonomy $\op{Hol}_{\gamma}(\lambda_0)\in U(1)$ of a closed loop $\gamma:[0,1]\mapsto\mathbb{CP}^1$ is defined as the element that satisfies $\tilde{\gamma}(1)=\op{Hol}_{\gamma}(\lambda_0)\tilde{\gamma}(0)$, where $\tilde{\gamma}$ is the horizontal lift of $\gamma$. In our case, it is not difficult to show that the holonomy, under the identification $U(1)\cong\bbR/\pi\bbZ$, is given as the integral 
$$\op{Hol}_{\gamma}(\lambda_0)=\int_{\sigma}\lambda_0,$$
where $\sigma$ is the composition of the path $\widetilde{\gamma}$ taken in the opposite direction and the rotation along the Hopf fiber in the positive direction until the loop closes, which is well-defined modulo $\pi$. Notice that we can take a capping disk $\mathbb{D}$ of $\sigma$ and project to a capping disk of $S_y$ on $\bbC P^1$ with the right orientations. This disk is the same disk used in the definition of $h(y)$, we can then see that $\op{Hol}_{\gamma}(\lambda_0)=h(y)$. Furthermore, we have that $h'(y)$  is the period of the circle $S_y$, viewed as an orbit of the flow of $X_f$. Now, observe that by equation \eqref{eq: preqflow}, we have an $S^1$-family of periodic points on the level $S_y$ if, and only if,
\begin{equation}\label{eq: quotient}
	\frac{h'(y)+h(y)}{\pi}=\frac{a}{b},
\end{equation}
 for $a,b$ relatively prime positive integers, since we are assuming that both $h$ and $h'$ are positive. If we choose the complementary disk to define $h(y)$, we get that the holonomy is $\pi-h(y)$ and the period of $X_f$ is $-h'(y)$, so we sometimes may assume that $a$ can be negative.

Observe that the definition of $h$ still makes sense and is well defined for isolated extremal points and critical Morse--Bott circles of $f$. Moreover, $h$ is not well defined for $\Lambda\subset \Lambda_0$ be the union of the connected components of $f^{-1}(\text{crit}(f))$ containing a saddle point for $f$. Therefore, by taking a consistent choice of disks, we may consider $h$ as a function defined on $\bbC P^1\backslash \Lambda$. 
 
 In addition, we have the following consequence of the integrability of $S^1$-invariant domains.
 
 \begin{lemma}\label{lemmma:toric}
 	Let $f:\bbC P^1\mapsto\bbR$ be a Morse--Bott function 
 	\begin{enumerate}[label=(\roman*)]
 		\item If $f$ has only two isolated critical points, then the domain $\mathbb{X}_F\subset\mathbb{C}^2$ is symplectomorphic to a star-shaped toric domain.
 		\item Let $\Lambda$ be the union of the connected components of $f^{-1}(\text{crit}(f))$ containing a saddle point for $f$, then there exist a finite family of pieces of curves $\{\alpha_i\}_{i\in I}$ in $\bbR^2_{\geq 0}$ such that 
 		 $(S^3\backslash p^{-1}(\Lambda),\,\lambda_f)$ is strictly contactomorphic to $(\sqcup_{i\in I}\mu^{-1}(\text{Im}(\alpha_i)),\,\lambda_0)$
 	\end{enumerate}
 \end{lemma}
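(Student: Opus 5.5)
The plan is to build explicit action--angle (toric) coordinates on each invariant piece from the two commuting, homogeneous Hamiltonians $H=|\mathbf z|^2$ and $G=|\mathbf z|^2F(\mathbf z/|\mathbf z|)$, using the function $h$ just constructed, and then to identify the boundary $\partial\mathbb{X}_F$ with the preimage under $\mu$ of a curve. On a cylinder $\mathcal{U}\cong\bbR/\pi\bbZ\times(y_0,y_1)$ with $f(x,y)=y$, the set $p^{-1}(\mathcal U)\subset S^3$ is a $T^2$-bundle over $(y_0,y_1)$. The fibres are $T_y=p^{-1}(S_y)$, and each one carries two commuting circle actions: the Hopf flow $R_0$, and the flow of $R_0+\widetilde X_f$ reparametrized so that it closes up after one turn around $S_y$. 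The periods are governed by $h(y)$ and $h'(y)$: the holonomy is $h(y)$ and the period of $X_f$ on $S_y$ is $h'(y)$.

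Concretely, take the two integral classes on $T_y$ given by the Hopf fibre and by the closed loop obtained from the horizontal lift of $S_y$ followed by the return along the fibre. Define the action variables as $\lambda_0$-integrals over these cycles, normalized by $\pi$ as in $\mu=\pi(|z_1|^2,|z_2|^2)$. By the holonomy computation, these integrals equal $e^{y}$ times an affine combination of $\pi$ and $h(y)$, up to $SL(2,\bbZ)$. So I would set
$$\alpha_{\mathcal U}(y)=e^{y}\bigl(h(y),\,\pi-h(y)\bigr),$$
possibly with constants adjusted. Since $\lambda_f=e^{y}\lambda_0$ on $T_y$ and $\lambda_0$ is invariant, the standard Arnold--Liouville argument for a strict contact form invariant under a $T^2$-action with connected orbits gives a $T^2$-equivariant strict contactomorphism from $p^{-1}(\mathcal U)$ to $\mu^{-1}(\mathrm{im}\,\alpha_{\mathcal U})$. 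Here $\lambda_0$ on the toric side is determined by the curve, via $\lambda_0=\frac1{2\pi}(x\,d\theta_1+y\,d\theta_2)$ in toric coordinates, and homogeneity, as in \cite{Colombo_2025}, makes this contact and not merely symplectic.

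Next I would treat the ends of each cylinder. If an end of $\mathcal U$ is an isolated extremum $x$ of $f$, then $h\to0$ (or $\pi$) there, and $p^{-1}(x)$ is a single orbit. The curve then reaches an axis, and the torus bundle extends smoothly over the collapsed circle, exactly as the solid torus $\mu^{-1}$ of a curve meeting an axis. If the end is a Morse--Bott circle that is an extremum, $h$ is still smooth across it and adjacent cylinders glue into a single piece. The only ends where $h$ is undefined are components of $\Lambda$ containing saddles, and these are removed. This gives the finite family $\{\alpha_i\}$ indexed by the components of $\mathbb{CP}^1\setminus\Lambda$, which proves (ii).

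For (i), with exactly two isolated critical points, $\Lambda=\emptyset$. The single curve then goes from the $y$-axis to the $x$-axis, with $h$ running over $(0,\pi)$, so $\partial\mathbb{X}_F$ is strictly contactomorphic to $\partial X_\Omega$ for the region $\Omega$ under that curve. Radial (Liouville) extension of this strict contactomorphism then gives a symplectomorphism $\mathbb{X}_F\cong X_\Omega$, and $\Omega$ is star-shaped because the curve is transverse to the radial direction, which follows from positivity of $h'$ and $e^y$. I expect the main obstacle to be the smooth extension across isolated extrema, including checking that the angle coordinates have the right lattice normalization there, together with the precise bookkeeping of constants in $\alpha$. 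I would first verify both on the ellipsoid example, where everything is explicit.
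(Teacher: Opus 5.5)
Your proposal is correct and follows essentially the paper's route. You use the same action variables $I_1=e^{y}h(y)$, $I_2=e^{y}(\pi-h(y))$ from the horizontal-lift-plus-fibre loops, the homogeneity result of \cite{Colombo_2025} to make the action--angle map strict, extension over the isolated extrema (the paper uses Eliasson's theorem and continuity of $I_1d\phi_1+I_2d\phi_2$), and pieces indexed by the components of $\mathbb{CP}^1\setminus\Lambda$; getting (i) by radial Liouville extension of the boundary contactomorphism, rather than from the global system $(I_1,I_2)$, is only a cosmetic difference. One correction: the second circle action on $T_y$ cannot be a reparametrization of $R_0+\widetilde X_f$, whose orbits close after one turn only when $(h+h')/\pi\in\mathbb{Z}$; it is the $\lambda_f$-preserving flow of the action variable, a combination of $R_0$ and $\widetilde X_f$ with coefficients proportional to $h(y)$ and $h'(y)$, which is what your ``concretely'' step actually uses.
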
 
\begin{proof}
Assume first that $f$ is a perfect Morse--Bott function, $h(y)$ is a well defined function on the whole $\bbC P^1$. Therefore, we can define action coordinates $(I_1,I_2)$ for regular points of $f$ as 
	\begin{align}\label{eq:aacord}
		I_1(y,r)=\int_{\sigma_1}r\lambda_f=re^{y}h(y)& &I_2(y,r)=\int_{\sigma_2}r\lambda_f=re^{y}(\pi- h(y)),
	\end{align}
	
where $\sigma_1$ is defined by following to flow of the horizontal lift $\widetilde{X}_f$ of $X_f$ until reaching the same Hopf fiber, namely $\sigma_0$, and then rotating in the positive direction along the Hopf fiber until closing the loop. We can take $\sigma_2$ in a similar way, first we follow $\sigma_0$ in the opposite direction, and then rotating positively in the Hopf fiber until closing the loop. Observe that $I_1+I_2=\pi r e^f$.

Now, consider $z\in S^3$ and $r\in [0,1]$, then we can define

\begin{align*}
	I_1(z,r)=e^{f(p(z))}rh(p(z))& &I_2(z,r)=e^{f(p(z))}r(\pi-h(p(z)))
\end{align*}

Observe that $I_1,I_2$ are now smooth functions. Indeed, it is straightforward to see that it is smooth for regular levels. For critical levels, we can use Morse--Bott charts to prove smoothness. Observe that $I_1,I_2$ define a new integrable system whose only singular levels are those associated to isolated critical points of $f$.

 In addition, we can recover the action coordinates $I_1,I_2$ in this new setting. Indeed, we can define $I_1$ as the integral over some curve that follows the Hamiltonian flow of either $\widetilde{X}_{I_1}$ or $-\widetilde{X}_{I_1}$  until reaching the same Hopf fiber, namely $\sigma$, then following the Hopf flow in the positive direction until closing the curve. Similarly, $I_2$ is redefined integrating over a curve that follows $\sigma$ in the opposite direction, then following the Hopf flow in the positive direction until closing the curve. Finally, the symplectomorphism can be extended to the isolated critical points by Eliasson's theorem in \cite{Eliasson1990NormalFF}.

	For the second part, we see that $I_1,I_2$ are not well defined and smooth globally. However, $I_1,I_2$ can be defined outside $p^{-1}(\Lambda)$ and are smooth. Hence, for each connected component of $S^3\backslash p^{-1}(\Lambda)$, we have an integrable system defined by $I_1,I_2$. Now, observe that $I_1,I_2$ are homogeneous and by \cite{Colombo_2025} we conclude that the action-angle coordinates actually induce a contactomorphism, i.e., if $(I_1,I_2,\phi_1,\phi_2)$ are the action-coordinates then
	
	\begin{equation}\label{eq:contactoric}
		I_1d\phi_1+I_2d\phi_2=\lambda_f
	\end{equation}
	
	for regular levels. For an isolated critical point $x$ of $f$, observe that although either $\phi_1$ or $\phi_2$ is not well defined. Assume without loss that $\phi_1$ is not well defined on $p^{-1}(x)$. However, $I_1d\phi_1$ is still well defined and equal to $0$. Therefore, we can extend this contactomorphism to the isolated critical points by continuity of \eqref{eq:contactoric}.
	
	Finally, we are going to denote $\alpha_i$ as a parametrization of the image of the $i$-th connected component under the map $(I_1,I_2)$. We conclude $(S^3\backslash p^{-1}(\Lambda),\lambda_f)$ and $(\sqcup_{i\in I}\mu^{-1}(\text{Im}(\alpha_i)),\lambda_0)$ are strictly contactomorphic.
	
\end{proof}


Now, we need to compute the index for a Reeb orbit in such $S^1$-families as a function of $a$ and $b$. In order to do so, let $S_y$ and $\mathcal{U}$ as before. Since the rotation number is defined locally around $S_y$, we can then modify $f$ outside of the cylinder $\mathcal{U}$ to a perfect Morse--Bott function $g:\mathbb{CP}^1\to \mathbb{R}$. Hence, Lemma \ref{lemmma:toric} reduces the calculation of the rotation number from $S^1$-invariant domains to toric domains.

 We briefly recall the relation between rotation number of orbits of a given star-shaped toric domain $X_\Omega$ and a parametrization $\alpha$ of $\overline{\partial_+\Omega}$. There are 2 types of Reeb orbits. The first type are orbits in the fiber of the points $\alpha(0)$ or $\alpha(1)$ in the coordinate axes. The second type of orbits come in $S^1$-families, in the fiber of a point $\alpha(t)$ such that
$$\frac{\alpha'_2(t)}{\alpha'_1(t)}\in \bbQ,$$
i.e. points where the slope of the curve is rational. Another way to describe it is by taking the $\nu=(\nu_1(t),\nu_2(t))$ the outward normal vector to the curve in a certain point $\alpha(t)$. Indeed, $\nu_2/\nu_1$ is rational if, and only if, the slope is rational, since  $-\frac{\nu_1}{\nu_2}=\frac{\alpha'_2(t)}{\alpha'_1(t)}$. We can then assume that $\nu$ has integer and relatively prime coordinates. Under this normalization, we have, by \cite{Gutt2022Examples}, that a Reeb orbit $\gamma$ that projects to $\alpha(t)$ has rotation number equal to 

\begin{itemize}
	\item $\rho(\gamma)=\nu_1+\nu_2$ if $t\in(0,1)$,
	\item $\rho(\gamma)=1-\op{slope}(\alpha)$ or  $\rho(\gamma)=1-(\op{slope}(\alpha))^{-1}$ if $t=0$ or $1$.
\end{itemize}
We can also extend this to pieces of curves. In this case, we have the same Reeb orbits we described. As the rotation number of orbits is local, i.e., the rotation number only depends on a neighborhood of an orbit $\gamma$, we have that the rotation numbers described in the same way.

 We can now use the toric description and the coordinates in \eqref{eq:aacord} to compute the rotation number and the Robbin-Salamon index of those undistinguished Reeb orbits. In toric coordinates,  we can see from \eqref{eq: quotient} that
\begin{equation}\label{eq:slope}
	\frac{\partial_yI_2}{\partial_yI_1}=\frac{\pi-h'(y)-h(y)}{h'(y)+h(y)}=\frac{b-a}{a}
\end{equation}
for regular levels of $f$. For critical levels of f, we have the slope is going to be the limit of \eqref{eq:slope}. If $a$ is positive, \eqref{eq:slope} is given by $-\frac{\nu_1}{\nu_2}$, where $\nu_2=a$ and $\nu_1=a-b$. In fact, the coordinates $(\nu_1,\nu_2)$ would be the integer outward normal vector on the curve $(I_1,I_2)$ and the rotation number of the orbits in the fiber would be equal to $\rho=\nu_1+\nu_2=2a-b$. 

On top of that, recall that the period of the flow of $\widetilde{X}_f$ at a regular level is given by $|h'(y)|$. Consequently, as approaching a Morse--Bott critical circle $|h'(y)|$ goes to $\infty$. By \eqref{eq:slope}, the slope on Morse--Bott critical circle is $-1$ and the rotation number is $2$.  

Putting all this together we have proved the following statement.

\begin{theorem}\label{thm: undior}
	Let $f:\bbC P^1\mapsto\bbR$ be a Morse--Bott function. Let $h$ be defined as before for all regular points. If 
	$$\frac{h'(y)+h(y)}{\pi}=\frac{a}{b}>0.$$
	Then the rotation number of the associated orbit is equal to $2a-b$. Moreover, the rotation number of the $S^1$-family of orbits associated to a Morse--Bott critical circle is $2$.
\end{theorem}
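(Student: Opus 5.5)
Since the rotation number of a periodic orbit depends only on the linearized Reeb flow in a neighborhood of it, the plan is to reduce to a toric domain. Fix a regular level $S_y$ in a cylinder $\mathcal{U}\subset\mathbb{CP}^1\setminus\Lambda_0$ with $f(x,y)=y$. Modify $f$ outside $\mathcal{U}$ to a perfect Morse--Bott function $g$ (two isolated critical points, no saddles). The Reeb flows of $\lambda_f$ and $\lambda_g$ agree near $p^{-1}(S_y)$, so the orbits over $S_y$ have the same rotation number for both forms. By Lemma \ref{lemmma:toric}(i), $(S^3,\lambda_g)$ is strictly contactomorphic to $\partial X_\Omega$ for a star-shaped toric domain, and the contactomorphism is realized by the action coordinates \eqref{eq:aacord}. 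Concretely, the boundary curve is $\alpha(y)=(e^{y}h(y),\,e^{y}(\pi-h(y)))$, taking $r=1$ on $\partial\mathbb{X}_F$. The $S^1$-family of orbits over $S_y$ maps to the torus fiber over $\alpha(y)$, an interior point of the curve.

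Next, compute the slope of $\alpha$. Differentiating gives $\alpha'(y)=e^{y}\big(h+h',\ \pi-h-h'\big)$, which yields \eqref{eq:slope}. Substituting the closedness condition \eqref{eq: quotient}, $h+h'=\pi a/b$, the tangent direction is proportional to $(a,\,b-a)$.

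The outward normal is then $(a-b,\,a)$, up to sign. I will fix the sign by checking that it points away from the origin; for this I use $a>0$ and the orientation convention that $h$ is increasing. Since $\gcd(a,b)=1$, we also have $\gcd(a-b,a)=1$, so this normal is primitive. The toric formula from \cite{Gutt2022Examples} then gives $\rho=\nu_1+\nu_2=(a-b)+a=2a-b$.

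I expect the main obstacle to be the sign and normalization bookkeeping. Three points need care:
\begin{itemize}
\item checking that \eqref{eq: quotient} with coprime $a,b$ is exactly the rational-slope condition, with the correct multiplicity, i.e.\ that the simple orbit corresponds to the primitive normal;
\item checking that the choice of capping disk (replacing $h$ by $\pi-h$, which flips the sign of $a$) corresponds to reflection across $y=x$, which preserves $\nu_1+\nu_2$;
\item checking that the outward direction is indeed $(a-b,a)$ and not its negative.
\end{itemize}
I would verify all three against the ellipsoid example, where $h$ is explicit.

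For a Morse--Bott critical circle, I take the limit $y\to y_1$. There the period $|h'(y)|$ of $X_f$ on $S_y$ tends to $\infty$ while $h$ stays bounded, so the slope in \eqref{eq:slope} tends to $-1$. The curve $\alpha$ is smooth across the critical level, using smoothness of $I_1,I_2$ from Morse--Bott charts as in Lemma \ref{lemmma:toric}. Hence the fiber over the critical circle is an interior torus whose normal is $(1,1)$, and its rotation number is $1+1=2$.
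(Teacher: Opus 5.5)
Your proposal is correct and follows essentially the same route as the paper. You localize near $S_y$, replace $f$ by a perfect Morse--Bott function, pass to the action coordinates $(I_1,I_2)=(e^{y}h,\,e^{y}(\pi-h))$, read off the slope $(b-a)/a$ and the primitive outward normal $(a-b,a)$ to get $\rho=2a-b$, and use $|h'(y)|\to\infty$ to get slope $-1$ and $\rho=2$ at Morse--Bott critical circles. Your sign check also goes through directly: $\langle\alpha(y),(a-b,a)\rangle=e^{y}(\pi a-bh)=e^{y}\,b\,h'(y)>0$ when $h$ is increasing and $b>0$.
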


\subsection{Sufficient and necessary condition for dynamical convexity}
In this section, we use the previous results to give necessary and sufficient conditions for the domain $\mathbb{X}_F$ to be dynamically convex.

\begin{theorem}
	Let $f:\bbC P^1\mapsto\bbR$ be a Morse--Bott function. Let $h$ be defined as before for regular level sets $S_y$. Let $\lambda_1^f(x)$ and $\lambda_2^f(x)$ be the eigenvalues of the bilinear form induced by $\op{Hess}_xf$ over an isolated critical point $x\in \mathbb{CP}^1$ of $f$, with respect to the Fubini-Study metric. Then, $\mathbb{X}_F$ is dynamically convex if, and only if, 
	\begin{itemize}
		\item For every regular level set $S_y$ of $f$, $$\frac{h'(y)+h(y)}{\pi}\notin[0,1].$$
		\item Let $x$ be a local minimum to $f$. Then $\gamma_x$ is nondegenerate and 
		 $$|\lambda_1^f(x)\lambda_2^f(x)|< 4.$$
	\end{itemize}
	
\end{theorem}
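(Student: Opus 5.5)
The plan is to sort the Reeb orbits of $\lambda_f$ into three types and, for each type, turn the condition ``rotation number $>1$'' (equivalently $CZ\geq 3$ for the lower semicontinuous extension) into a condition on $f$. The types are: the $S^1$-families over regular levels $S_y$; the families over Morse--Bott critical circles; and the distinguished orbits $\gamma_x$ over isolated critical points. Since $\mathbb{X}_F$ is dynamically convex exactly when every orbit of every type has rotation number strictly greater than $1$, we only need to show that these per-type conditions are equivalent to the two bullets. For the critical circles there is nothing to check: Theorem \ref{thm: undior} gives rotation number $2>1$.

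\emph{Regular levels.} By \eqref{eq: quotient}, a level $S_y$ carries an $S^1$-family of periodic orbits exactly when $q(y):=(h'(y)+h(y))/\pi$ is rational. Write $q=a/b$ in lowest terms with $b>0$, and first treat $q>0$. Theorem \ref{thm: undior} gives rotation number $2a-b$ for the simple orbit. For the $m$-fold iterate we rescale $(a,b)$ to $(ma,mb)$, so the rotation number is $m(2a-b)$. Hence the family is dynamically convex iff $2a-b\geq 1$, i.e. $2a-b>0$, i.e. $q>1/2$.

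This does not yet match the bullet, which excludes all of $[0,1]$. I would resolve the mismatch using the $y=x$ reflection symmetry built into the toric picture. Replacing the capping disk by its complement sends $h\mapsto \pi-h$ and $h'\mapsto -h'$. In the normal vector $(\nu_1,\nu_2)=(a-b,a)$, the correct primitive outward normal must have \emph{both} coordinates positive for the rotation number $\nu_1+\nu_2$ to be at least the monotone value. Computed symmetrically, the monotonicity condition becomes $a>0$ and $a-b>0$, i.e. $q>1$; the negative range $q<0$ is handled by the complementary-disk convention. So I would recompute, via \eqref{eq:slope}, the sign condition on the normal for each rational $q$, and check that a rotation number $\leq 1$ (taking iterates into account) occurs iff some $q\in[0,1]$.

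Irrational $q$ produces no orbits on that level. However, if $q(y_*)\in[0,1]$ for an irrational value, continuity of $q$ gives nearby rational values in $[0,1]$ unless $q$ is locally constant, and a constant irrational $q$ contradicts the ODE $h'+h=c$ on an interval only through the boundary behaviour. I expect this density step, together with reconciling the normal-vector sign conventions with the endpoint $q=1$ (which corresponds to a normal $(0,a)$ on the boundary of monotonicity), to be the main obstacle. My first attempt would be to argue directly in the toric curve $\alpha_i$ of Lemma \ref{lemmma:toric}: there, $q\in[0,1]$ means exactly that the outward normal fails to lie in the open positive quadrant, and Gutt--Hutchings--Ramos's monotone-equals-dynamically-convex argument applies locally.

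\emph{Distinguished orbits.} I would use Theorem \ref{thm:RSdis}. At saddles the nondegenerate iterates have $CZ=4k\geq 3$; degenerate iterates are covered by the lower semicontinuous extension, since $\mu_{RS}=4k$. At a maximum, $\operatorname{sign}=-2$, which only increases the index. At a minimum $x$, $\operatorname{sign}=2$, so $CZ(\gamma_x^k)=4k-1-2\lfloor k\sqrt{\lambda_1\lambda_2}/2\rfloor$, and the rotation number is $\rho(\gamma_x)=2-\sqrt{\lambda_1\lambda_2}/2$. The condition $\rho>1$ is equivalent to $\sqrt{\lambda_1\lambda_2}<2$, i.e. $|\lambda_1\lambda_2|<4$. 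In the boundary case $\sqrt{\lambda_1\lambda_2}=2$, the orbit $\gamma_x$ is degenerate with $\mu_{RS}=2$, so it fails; this is why nondegeneracy appears in the statement. Conversely, under $|\lambda_1\lambda_2|<4$ the orbit $\gamma_x$ itself is nondegenerate with $CZ=3$, and every iterate has $\rho k>k$. Combining the three types yields the equivalence.
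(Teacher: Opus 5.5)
Your treatment of the distinguished orbits, of the Morse--Bott circles, and of sufficiency on regular levels is fine and matches the paper. For sufficiency: if $q>1$ then $a>b\ge 1$, so $2a-b\ge a+1\ge 2$.

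\textbf{Gap.} The gap is the necessity of the first bullet, which you try to obtain level by level. No such argument can work, because $q\notin[0,1]$ is not a condition on the orbits over $S_y$ alone. Take $q=3/4$. Then $(a,b)=(3,4)$, the primitive outward normal is $(a-b,a)=(-1,3)$, and the rotation number is $2a-b=2$. So this family and all its iterates satisfy $\rho>1$, even though the curve is not monotone there. Your heuristic that the normal ``must have both coordinates positive'' for $\nu_1+\nu_2$ to be large enough is false. The Gutt--Hutchings--Ramos monotone/dynamically convex argument is not local either.

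Your local threshold is also off by one. Dynamical convexity requires $\rho>1$ strictly, and a Morse--Bott family with $\rho=1$ has lower semicontinuous index $1$. So the local condition is $2a-b\ge 2$, not $2a-b\ge 1$: $q=2/3$ gives $\rho=1$ while $q=3/4$ gives $\rho=2$, so $q>1/2$ is not the criterion. For the same reason the density-of-rationals step is useless: rational levels near a value in $[0,1]$ may be perfectly harmless.

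\textbf{Missing idea.} What you need is a global intermediate-value argument along each toric piece $\alpha_i$, which is how the paper closes the converse.
\begin{enumerate}
\item The first bullet says exactly that the slope $(1-q)/q$ of $\alpha_i$ is negative at every regular level. The tangent direction of $\alpha_i$ varies continuously, and the slope equals $-1$ at Morse--Bott circles.
\item Assuming dynamical convexity, the slope is negative near the ends of every piece. At an endpoint on a coordinate axis, the distinguished orbit has $\rho=1-\mathrm{slope}$ or $\rho=1-\mathrm{slope}^{-1}$, so $\rho>1$ forces negative slope. Concretely, with $h$ chosen increasing, $q\to 2/\sqrt{\lambda_1\lambda_2}$ at a minimum, which exceeds $1$ exactly when $\rho(\gamma_x)=2-\sqrt{\lambda_1\lambda_2}/2>1$. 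At a maximum, $q\to 1+2/\sqrt{\lambda_1\lambda_2}>1$.
\item On a piece with no axis endpoint, the levels approach a saddle component, so $h'\to\infty$, $q\to\infty$, and the slope tends to $-1$.
\item Hence if the slope were $\ge 0$ at some level, the tangent direction would have to pass through horizontal or vertical. That gives a level with $q=1$, or $q=0$ in the complementary-disk convention. This value is rational, with primitive normal $(0,1)$ or $(1,0)$, and it carries an $S^1$-family of rotation number $2a-b=1$, contradicting dynamical convexity.
\end{enumerate}
This argument needs no density step, and irrational levels never enter.
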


\begin{proof}
	We first prove that this is a sufficient condition. Indeed, we proved in Theorem \ref{thm:RSdis} that the Conley--Zehnder index of distinguished orbits is given by \eqref{eq:RSdis}. Therefore, we can see that the condition $|\lambda_1^f(x)\lambda_2^f(x)|<4$ is sufficient and necessary  for the nondegenerate orbit associated to local minimum $x\in \bbC P^1$ to have Conley--Zehnder index greater than or equal to 3. Moreover, for a saddle point or a point of local maxima we have that $\op{sign}\op{Hess} f\leq 0$ , hence the Conley--Zehnder index is already greater than or equal to 3. In the case of undistinguished orbits, we proved in Theorem \ref{thm: undior} that the rotation number of an orbit in the regular level $y$ is equal to either $2a-b$ or $2b-a$, if $h'(y)+h(y)=\pi\frac{a}{b}$. Therefore, if $h'(y)+h(y)>\pi$, we have that $a>b>0$ and the rotation number on those orbits satisfies that $\rho=2a-b>1$. On the other hand, if $h'(y)+h(y)<0$ we know that if $\tilde{h}=\pi-h$, then $\tilde{h}$ satisfies that $\tilde{h}'(y)+\tilde{h}(y)>\pi$. Consequently, by Theorem \ref{thm: undior} the rotation number is greater than 1. At Morse--Bott critical level sets, the rotation number of the associated orbits is $2$, by Theorem \ref{thm: undior}.
	
	We now prove the converse statement. By Lemma \ref{lemmma:toric}, we can realize $(S^3\backslash p^{-1}(\Lambda),\lambda_f)$ as union of toric contact manifolds $\mu^{-1}(\text{Im}(\alpha_i))$, where $\alpha_i$ are pieces of curves in $\bbR_{\geq 0}$. Similar to the proof of Theorem \ref{thm: undior}, we can use this description to describe all orbits by the toric picture, except for orbits that come from saddle critical points of $f$. The orbits coming from saddle points have Conley--Zehnder equal to 4, by Theorem \ref{thm:RSdis}.

	We have already analyzed the orbits related to critical points and we are left to analyze undistinguished Reeb orbits related to the pieces $\mu^{-1}(\text{Im}(\alpha_i))$. It follows from the proof of Theorem \ref{thm: undior} that the slope is $\frac{b-a}{a}$, this is negative if and only if $a>b$ or $a<0$, which in turn is equivalent to $h'(y)+h(y)\notin [0,\pi]$. Therefore, we need to prove that the slope at every point of the curve $\alpha_i$, is negative. Indeed, take $\alpha_i$ to be one of those pieces of curve. Suppose without loss of generality that $\alpha_i$ intersects a coordinate axis. For this intersection, we have an associated orbit $\gamma$ such that $\rho(\gamma)=1-\op{slope}(\alpha)$ or  $\rho(\gamma)=1-(\op{slope}(\alpha))^{-1}$. Consequently, the slope on this point is negative. Within the same piece of curve $\alpha_i$, observe that every point has negative slope. In fact, if there was a point of positive slope, there would exist a point of vertical or horizontal slope, which corresponds to an orbit of rotation number $1$.	Suppose now that $\alpha_i$ has no intersection with the coordinate axis. Recall that the period of $X_f$ in a regular level $S_y$ is given by $|h'(y)|$. As we approach a saddle point of $f$, $|h'(y)|$ goes to infinity. By \eqref{eq: quotient} and \eqref{eq:slope}, the slope of a regular level set close to a saddle point would be close to $-1$. Notice that $\mu^{-1}(\text{Im}(\alpha_i))$ always has points that correspond to a point in $S^3$ arbitrarily close to a saddle point. Therefore, $\alpha_i$ always has a point with negative slope, and therefore every point has negative slope by the same argument as before.

\end{proof}

 \begin{proof}[Proof of Theorem \ref{thm:characdynconv}]
 	We can ignore the saddle points since we know from Theorem \ref{thm:RSdis} that the Conley--Zehnder index of its corresponding orbit is 4. Moreover, the condition $\frac{h'(y)+h(y)}{\pi}\notin[0,1]$ is equivalent to the corresponding curves being monotone up to the axis points. For the axis points, as the Conley--Zehnder is a local notion, having Conley--Zehnder greater than or equal to 3 in the axis orbits is equivalent to being monotone at that point.
 \end{proof}

\bibliographystyle{plain}
\bibliography{ref}

\end{document}